\newcommand{\ts}{\mathsf{T}}
\tikzstyle{decision} = [diamond, draw, fill=blue!20, 
\tikzstyle{block} = [rectangle, draw, fill=blue!20, 
\tikzstyle{line} = [draw, -latex']
\tikzstyle{cloud} = [draw, ellipse,fill=red!20, node distance=3cm,
\tikzset{main node/.style={circle,fill=blue!20,draw,minimum size=1cm,inner sep=0pt}}
\newcommand{\black}{\color{black}}
\begin{document}
\title{Minimal Wasserstein Surfaces}
\author[Li]{Wuchen Li}
\email{wuchen@mailbox.sc.edu}
\address{Department of Mathematics, University of South Carolina, Columbia, SC 29208.}
\author[Georgiou]{Tryphon T.\ Georgiou} 
\email{tryphon@uci.edu}
\address{Department of Mechanical \& Aerospace Engineering, University of California, Irvine, CA 92697.}
\keywords{Optimal transport; Minimal surface problems.}
\thanks{W. Li's work is supported by AFOSR MURI FP 9550-18-1-502, AFOSR YIP award No. FA9550-23-1-0087, NSF RTG: 2038080, and NSF DMS-2245097. T.T.~Georgiou's work is supported by the AFOSR under grant FA9550-20-1-0029, and by the ARO under grant W911NF-22-1-0292.}
\maketitle
\begin{abstract}
In finite-dimensions, minimal surfaces that fill in the space delineated by closed curves and have minimal area arose naturally in classical physics in several contexts. No such concept seems readily available in infinite dimensions. The present work is motivated by the need for precisely such a concept that would allow natural coordinates for a surface with a boundary of a closed curve in the Wasserstein space of probability distributions ($\mathcal P_2$, space of distributions with finite second moments). The need for such a concept arose recently in stochastic thermodynamics, where the Wasserstein length in the space of thermodynamic states quantifies dissipation while ``area'' integrals (albeit, presented in a special finite-dimensional parameter setting) relate to useful work being extracted. Our goal in this work is to introduce the concept of a {\em minimal surface in} $\mathcal P_2$, and explore options for a suitable construction. To this end, we introduce the basic mathematical problem and develop two alternative formulations. Specifically,
 we cast the problem as a two-parameter Benamou-Breiner type minimization problem and derive a {\em minimal surface equation} in the form of a set of two-parameter partial differential equations. Several explicit solutions for minimal surface equations in Wasserstein spaces  are presented. These are cast in terms of covariance matrices in Gaussian distributions.  
\end{abstract}

\section{Introduction}
A familiar minimal surface is that formed by a soap film \cite{courant1940soap}, with equal pressure on either side, where the mean curvature is zero at every point. Minimal surfaces are abundant in the physical world. They originate as solutions to variational problems \cite[Chapter IV]{hilbert2021geometry}, very much like their familiar one-dimensional counterparts, curves, that minimize some type of an action integral. Indeed, for a soap film delineated by a closed curve, and similarly for more general minimal surfaces, the energy stored in the surface-tension is minimized with the area, and this is what nature ostensibly selects. Structural and mechanical advantages, improved electrical and heat transport properties, as well as noticeable aesthetics of minimal surfaces are some of the reasons that they often appear in designs and structures in several engineering disciplines including architecture and material sciences \cite{canestrino2021considerations,torquato2004minimal}.

Whereas in finite-dimensions, the geometry of minimal surfaces has long been a central theme in differential geometry, no such concept has seemingly been studied in infinite dimensional spaces.
Our interest in infinite dimensions stems from recent and fast-developing strides of the Wasserstein geometry--a natural geometry on the space of probability distributions with a plethora of applications in physics and engineering, probability theory, partial differential equations (PDEs), computer vision, machine learning, network science, control theory, filtering and data assimilation, weather forecasting and several other fields, see, e.g., \cite{evans1997partial,haker2004optimal,Villani2009_optimal,villani2021topics,chen2016relation,peyre2019computational,garbuno2020interacting,chen2021stochastic,chen2021optimal,Li1,Li2,Li3,Li4} and the references therein.

Indeed, probability distributions with finite second order moments constitute a space that can be endowed with an ``almost''  Riemannian structure, the Wasserstein space $\mathcal P_2$ \cite{villani2021topics,AGS}. This subject grew out of advances on the classical Monge-Kantorovich optimal mass transport gained by the Benamou-Brenier formula \cite{BB} and formalized in the Otto calculus \cite[Chapter 8]{villani2021topics}.
The Wasserstein distance, which quantifies distances between probability distributions, defines a {\em bona fide} metric on a suitably defined tangent space. The study of gradient and Hamiltonian flows in $\mathcal P_2$ followed, starting with \cite{JKO}, who discovered a deep connection between the entropy functional and the heat equation--the latter seen as gradient flow. 
Other useful applications relate to the modeling and computation of physical or social dynamics captured by the evolution of density functions.  
Interestingly, the far-reaching discovery by \cite{JKO} proved key in quantifying finite-time thermodynamic transitions. It pointed to dissipation being the Wasserstein length of curves in the space of probability densities \cite{chen2019stochastic} and allowed precise estimates of efficiency and power that can be delivered by thermodynamic engines; see \cite{chen2019stochastic, miangolarra2021underdamped,seifert2012stochastic} and the references therein.

Our starting point of inquiry about surfaces in $\mathcal P_2$ was the analysis of thermodynamic cycles driven by temperature gradients, in \cite{miangolarra2022geometry, miangolarra2021energy}, where Wasserstein length of the thermodynamic cycle quantifies dissipation while an ``area'' integral of an enclosed surface represents useful work being extracted. The analysis in \cite{miangolarra2022geometry,miangolarra2021energy} was based on suitably parametrized Gaussian curves and surfaces in the infinite-dimensional Wasserstein space. However, it raises the question of addressing such concepts in greater generality.
To this end, in the present work, we seek and introduce natural notions of a {\em minimal surface in} $\mathcal P_2$, and explore options for suitable constructions. That is, we explain how to define minimal surfaces bounded by closed curves in $\mathcal P_2$, derive minimal surface equations, and specialize to the finite-dimensional case of Gaussian distributions.

Specifically, we introduce two different formulations of the minimal surface problem in $\mathcal P_2$. The first, based on density trajectories, represents a two-parameter generalization of the Benamou-Brenier formula for defining Wasserstein geodesics. The second is based on a direct density-manifold formulation of the concept of area. We derive the corresponding minimal surface equations that can be viewed as a two-parameter extension of Wasserstein geodesics. For Gaussian distributions, we cast the minimal Wasserstein-surface problem directly in terms of respective covariance matrices. Several explicit solutions of minimal Wasserstein surfaces are presented, such as planes, Scherk’s surfaces, Catenoids, and Helicoids in terms of diagonal covariance matrices. 

The paper is organized as follows. In section \ref{sec2}, we first review the minimal surface equations in Euclidean space. We then derive the minimal surface equations in probability density space in section \ref{sec3}. We also write several finite-dimensional examples of minimal surface equations for Gaussian distributions in section \ref{sec4}; these are expressed in terms of covariance matrices with several explicit solutions presented.

\section{Minimal Wasserstein surfaces}\label{sec2}
We begin by reviewing the minimal surface problem in Euclidean space and then discuss examples in the three-dimensional domain. We then derive the minimal surface problems in the Wasserstein space. 
\subsection{Minimal surfaces in Euclidean space}
Consider a closed curve $\Gamma$ in $\mathbb{R}^n$. Denote by $\gamma\in C^2(\mathbb{R}^2;\mathbb{R}^d)$ a two-dimensional surface with $\Gamma$ as its boundary. As usual, we use symbols $\cdot$ or $(\cdot, \cdot)$ and $\|\;\;\|$ to denote the Euclidean inner product and the Euclidean norm. The minimal surface problem in $\mathbb{R}^n$ refers to identifying such a surface $\gamma$ having minimal area, i.e., it refers to the following variational problem: determine
\begin{equation}\label{E_min_surface}
\min\; \{\mbox{ Area}(\gamma) \mid \mbox{ over all surfaces }\gamma \mbox{ enclosed by }\Gamma \}, 
\end{equation}
and characterize any minimizer. 
The formula for the area can be expressed as 
\begin{align*}
\textrm{Area}(\gamma)=&\int\int \sqrt{\mathrm{det}\begin{pmatrix} 
\|\partial_s \gamma\|^2 & \partial_s\gamma\cdot \partial_t\gamma \\
 \partial_s\gamma\cdot \partial_t\gamma &  \|\partial_t\gamma\|^2
\end{pmatrix}}
ds dt\\
=&\int \int \sqrt{\|\partial_s\gamma(s,t)\|^2\|\partial_t\gamma(s,t)\|^2-(\partial_s\gamma(s,t)\cdot \partial_t\gamma(s,t))^2 }ds dt
\end{align*}
where the infimum is sought among all (suitably smooth) parametric descriptions $\gamma(s,t)$ for the two-dimensional surface. The minimizer of \eqref{E_min_surface} is shown to satisfy a suitable minimal surface equation. 

To fix ideas, we discuss an example. 
Consider a two-dimensional surface embedded in $\mathbb R^3$, described by  
\begin{equation*}
  \gamma(s,t)=(s, t, z(s,t)) \in\mathbb R^3, 
\end{equation*}
with $z=z(s,t)$ representing the surface parameterized by $(s,t)\in [0,1]^2$. Suppose $z(0, t)$, $z(1, t)$, $z(s, 0)$, $z(s,1)$ are given curves, for $s, t\in [0,1]$.  In this case, 
\begin{equation*}
\partial_s\gamma=(1, 0, \partial_s z(s,t)),\quad   \partial_t\gamma=(0, 1, \partial_t z(s,t)).  
\end{equation*}
After standard computations, the minimization surface problem \eqref{E_min_surface} reduces to 
\begin{equation*}
    \min_{z}\quad \int_0^1\int_0^1 \sqrt{1+|\partial_sz(s,t)|^2+|\partial_t z(s,t)|^2}dsdt, 
\end{equation*}
where the minimizer is among all continuous differentiable two-parameter curves $z$ with fixed boundaries $z(0, t)$, $z(1, t)$, $z(s, 0)$, $z(s,1)$, and $s, t\in [0,1]$. It easily follows that the  critical-point satisfies 
\begin{equation}\label{ms}
\partial_s(\frac{\partial_sz}{\sqrt{1+|\partial_s z|^2+|\partial_t z|^2}})+\partial_t(\frac{\partial_tz}{\sqrt{1+|\partial_s z|^2+|\partial_t z|^2}})=0. 
\end{equation}
By expanding equation \eqref{ms}, we have 
\begin{equation*}
(1+|\partial_tz|^2)\partial^2_{ss}z-2\partial_tz\partial_sz\partial^2_{st}z+ (1+|\partial_sz|^2)\partial^2_{tt}z=0.    
\end{equation*}
{The plane is a trivial solution if the boundary set consists of a linear combination of lines (on a plane), i.e.,  when $\partial^2_{ss}z=\partial^2_{tt}z=\partial^2_{ts}z=0$, which indicates that $z$ is linear w.r.t.\ $s$, $t$. In general
there may be multiple non-trivial solutions for the nonlinear PDE \eqref{ms}.}

\subsection{Minimal surfaces in density space, I}\label{minimal}
We now turn to minimal surface problems in the space of probability densities. For simplicity of discussion, we consider Wasserstein-2 geodesics that delineate a rectangular boundary of interest. We refer readers to see definitions of Wasserstein-2 geodesics in \cite{AGS}. 

Consider absolutely continuous measures with density functions $\rho(0, 0, x)$, $\rho(0,1,x)$, $\rho(1,0,x)$, $\rho(1,1,x)$, $x\in\mathbb R^n$, with finite second-order moments. Let $\rho(0, t, x)$ represent a Wasserstein-2 geodesic connecting  $\rho(0, 0, x)$ and $\rho(0,1,x)$, for $t\in [0,1]$.  Similarly, let $\rho(1, t, x)$ be the Wasserstein-2 geodesic connecting  $\rho(1, 0, x)$ and $\rho(1,1,x)$, for $t\in [0,1]$,  $\rho(s, 0, x)$ be the Wasserstein-2 geodesic connecting  $\rho(0, 0, x)$ and $\rho(1,0,x)$, for $s\in [0,1]$, and $\rho(s, 1, x)$ be the Wasserstein-2 geodesic connecting $\rho(0, 1, x)$ and $\rho(1,1,x)$, for $s\in [0,1]$. These Wasserstein geodesic curves delineate a rectangular region in the probability density space, as depicted in the following schematic: 
\begin{figure}[H]
\begin{tikzpicture}
\draw (0,0) node[below left] {$\rho(0,0,\cdot)$} --
(1,0) node[below right] {$\rho(1,0,\cdot)$} --
(1,1) node[above right] {$\rho(1,1,\cdot)$} --
(0,1) node[above left] {$\rho(0,1,\cdot)$} -- cycle;
\end{tikzpicture}
\end{figure}

Next, our goal is to describe a two-parameter surface $\rho(s,t,x)$, $(s,t)\in [0,1]^2$ in the probability density space $\mathcal P_2$ that defines in a suitable sense a minimal surface with the prescribed boundary. From now on, we use $\int$ to represent $\int_{\mathbb{R}^n}$. 
\begin{problem}[Minimal surfaces in density space]\label{prob1}{\em
Consider two-parameter families of probability densities $\rho\colon [0,1]^2\times \mathbb{R}^n\rightarrow \mathbb{R}$, and define the following minimization problem:  
\begin{subequations}\label{mws}
\begin{equation}\label{mws1}
\inf_{v_s, v_t,\rho}\quad \int_0^1\int_0^1\sqrt{\int \|v_s\|^2\rho dx\cdot \int\|v_t\|^2\rho dx-(\int v_s\cdot v_t\rho dx)^2 }ds dt,
\end{equation}
where the minimum is taken over the continuous differentiable density surface $\rho(s,t,\cdot)\in \mathcal{P}_2(\mathbb{R}^n)$, $s, t\in [0,1]$, and a choice of continuous differentiable vector fields $v_s$, 
$v_t\colon [0,1]^2\times\mathbb{R}^n\rightarrow\mathbb{R}$, such that the following two continuity equations hold 
\begin{equation}\label{mws2}
\begin{aligned}
&\partial_s\rho(s,t,x)+\nabla\cdot(\rho(s,t,x) v_s(s,t,x))=0,\\
&\partial_t\rho(s,t,x)+\nabla\cdot(\rho(s,t,x) v_t(s,t,x))=0,
\end{aligned}
\end{equation}
over the four fixed boundaries in the probability density space: 
\begin{equation*}   \textrm{$\rho(0, t, \cdot)$, $\rho(1, t, \cdot)$, $\rho(s, 0, \cdot)$, $\rho(s,1, \cdot)$, where $s, t\in [0,1]$.}
\end{equation*}
\end{subequations}
}\end{problem}

Note that in the above formulation,
\begin{equation*}
A(s,t)=\sqrt{\int \|v_s\|^2\rho dx\cdot \int\|v_t\|^2\rho dx-(\int v_s\cdot v_t\rho dx)^2 },
\end{equation*}
represents the area element in the space of probability densities, while the total area delineated by the boundaries is defined as
\begin{equation*}
\int_0^1\int_0^1 A(s,t)dsdt. 
\end{equation*}
Thus, the minimization problem \eqref{mws} seeks a two-parameter family (surface) in the probability density space, which minimizes the total area enclosed by density boundary curves. Equations \eqref{mws} can be viewed as a two-parameter generalization of the Benamou-Breiner formula \cite{BB} in deriving geodesics equations in Wasserstein-2 space. 

We next derive optimality conditions for the above variational problem in \eqref{mws}. 
\begin{proposition}
{Assume that the following equations hold
\begin{subequations}\label{csaddle}
\begin{equation}
\left\{\begin{aligned}
  &A^{-1}\Big[v_s\int {\|v_t\|^2}{\rho}dx-v_t\int {v_s\cdot v_t}{\rho}dx\Big]=\nabla\Phi_s,\\
   &A^{-1}\Big[v_t\int {\|v_s\|^2}{\rho}dx-v_s\int {v_s\cdot v_t}{\rho}dx\Big]=\nabla\Phi_t, 
\end{aligned}\right.
\end{equation}
and 
\begin{equation}
\left\{\begin{aligned}
&\partial_s\Phi_s+\partial_t\Phi_t+A^{-1}\Big[\frac{1}{2}\|v_s\|^2\int {\|v_t\|^2}{\rho}dx +\frac{1}{2}\|v_t\|^2\int{\|v_s\|^2}{\rho}dx - {v_t\cdot v_s}\int {v_t\cdot v_s}{\rho} dx\Big]=0,\\
&   \partial_s\rho+\nabla\cdot (\rho v_s)=0,\\
&       \partial_t\rho+\nabla\cdot (\rho v_t)=0,
\end{aligned}\right.\
 \end{equation}
     \end{subequations}
  for suitable functions $\Phi_s$, $\Phi_t\colon [0,1]^2\times \mathbb{R}^n\rightarrow\mathbb{R}$ (Lagrange multipliers), vector fields $v_s$, $v_t$, and a two-parameter family $\rho$ as before. Then $\rho$ is a critical-point of the variational problem \eqref{mws}.}
\end{proposition}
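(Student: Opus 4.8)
The plan is to treat \eqref{mws} as a constrained minimization and apply the method of Lagrange multipliers, with $\Phi_s,\Phi_t$ enforcing the two continuity equations \eqref{mws2}. I would form the Lagrangian
\begin{equation*}
\mathcal L=\int_0^1\!\!\int_0^1\Big[A+\int\Phi_s\big(\partial_s\rho+\nabla\cdot(\rho v_s)\big)\,dx+\int\Phi_t\big(\partial_t\rho+\nabla\cdot(\rho v_t)\big)\,dx\Big]\,ds\,dt ,
\end{equation*}
and, to keep the algebra readable, abbreviate the three spatial moments appearing in $A$ as $a=\int\|v_s\|^2\rho\,dx$, $b=\int\|v_t\|^2\rho\,dx$, $c=\int v_s\cdot v_t\,\rho\,dx$, so that $A=\sqrt{ab-c^2}$ and $A^2=ab-c^2$. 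Stationarity of $\mathcal L$ in $\Phi_s$ and $\Phi_t$ returns the two continuity equations, i.e.\ the last two lines of \eqref{csaddle}; the work is to compute the variations in $v_s$, $v_t$, and $\rho$.

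For the $v_s$-variation, integration by parts gives $\int\Phi_s\,\nabla\cdot(\rho\,\delta v_s)\,dx=-\int\rho\,\nabla\Phi_s\cdot\delta v_s\,dx$, while differentiating $A=\sqrt{ab-c^2}$ gives $\delta_{v_s}A=A^{-1}\!\int\rho\,(b\,v_s-c\,v_t)\cdot\delta v_s\,dx$; setting the total to zero yields $A^{-1}(b\,v_s-c\,v_t)=\nabla\Phi_s$, which is the first line of \eqref{csaddle}, and the $v_t$-variation is the same computation with $s$ and $t$ interchanged, producing the second line. For the $\rho$-variation, I would integrate by parts in $s$ (resp.\ $t$) in the terms $\Phi_s\,\partial_s\delta\rho$ (resp.\ $\Phi_t\,\partial_t\delta\rho$), the endpoint contributions dropping because the four boundary curves are prescribed, so $\delta\rho$ vanishes on $\partial([0,1]^2)$; together with $\delta_\rho A=A^{-1}\!\int\big(\tfrac12 b\|v_s\|^2+\tfrac12 a\|v_t\|^2-c\,v_s\cdot v_t\big)\delta\rho\,dx$ this yields the pointwise identity
\begin{equation*}
A^{-1}\big(\tfrac12 b\|v_s\|^2+\tfrac12 a\|v_t\|^2-c\,v_s\cdot v_t\big)-\partial_s\Phi_s-\partial_t\Phi_t-v_s\cdot\nabla\Phi_s-v_t\cdot\nabla\Phi_t=0 .
\end{equation*}

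The one step that is not mere bookkeeping — and the main obstacle — is reconciling this with the cleaner first line of the second block of \eqref{csaddle}, which carries no $v_s\cdot\nabla\Phi_s$ terms. The resolution is to feed back the two $v$-equations just derived: dotting $\nabla\Phi_s=A^{-1}(b\,v_s-c\,v_t)$ with $v_s$ and $\nabla\Phi_t=A^{-1}(a\,v_t-c\,v_s)$ with $v_t$ and adding gives $v_s\cdot\nabla\Phi_s+v_t\cdot\nabla\Phi_t=A^{-1}(b\|v_s\|^2+a\|v_t\|^2-2c\,v_s\cdot v_t)$, which is exactly twice the $A^{-1}(\cdots)$ bracket above; the $\rho$-variation then collapses to $\partial_s\Phi_s+\partial_t\Phi_t+A^{-1}\big(\tfrac12 b\|v_s\|^2+\tfrac12 a\|v_t\|^2-c\,v_s\cdot v_t\big)=0$, i.e.\ the stated Hamilton--Jacobi-type equation once $a,b,c$ are spelled out. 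Having matched all five stationarity conditions, I would close with the standard Lagrange-multiplier argument: on the constraint set $\mathcal L$ coincides with the area functional $\int_0^1\!\int_0^1 A\,ds\,dt$, and since $\delta\mathcal L=0$ for every admissible variation once \eqref{csaddle} holds, the first variation of the area functional vanishes along all perturbations $(\delta\rho,\delta v_s,\delta v_t)$ compatible with \eqref{mws2} and the fixed boundary data; hence $\rho$ is a critical point of \eqref{mws}. I would also record the standing regularity and spatial-decay hypotheses on $\rho,v_s,v_t,\Phi_s,\Phi_t$ that justify differentiating under the integral sign and discarding the spatial boundary terms in the integrations by parts, along with $A>0$ so that $A^{-1}$ is well defined.
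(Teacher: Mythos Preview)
Your argument is correct, but the route differs from the paper's in one meaningful way. The paper first passes to momentum variables $m_s=\rho v_s$, $m_t=\rho v_t$, so that the constraints become \emph{linear} in $(m_s,m_t,\rho)$, namely $\partial_s\rho+\nabla\cdot m_s=0$ and $\partial_t\rho+\nabla\cdot m_t=0$; the $\rho$--variation of the constraint terms then yields only $-\partial_s\Phi_s-\partial_t\Phi_t$, and the Hamilton--Jacobi-type line drops out directly from $\frac{\delta}{\delta\rho}A(m_s,m_t,\rho)$ with no further manipulation. You instead keep $(v_s,v_t,\rho)$ as independent variables, which leaves $\rho$ coupled nonlinearly in the constraint $\nabla\cdot(\rho v_s)$ and produces the extra transport terms $-v_s\cdot\nabla\Phi_s-v_t\cdot\nabla\Phi_t$ in the $\rho$--stationarity condition; your elimination step, feeding the already-derived identities $\nabla\Phi_s=A^{-1}(b\,v_s-c\,v_t)$ and $\nabla\Phi_t=A^{-1}(a\,v_t-c\,v_s)$ back in to recognise those terms as exactly twice the $A^{-1}$ bracket, is the genuine extra idea that closes the gap. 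The paper's momentum substitution buys a cleaner, decoupled saddle-point system (and mirrors the standard Benamou--Brenier convexification); your approach avoids the change of variables at the cost of that one algebraic reduction, and has the minor virtue of making explicit why the transport terms are consistent with the stated equation rather than absent from it.
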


\begin{proof}
Denote momenta fields
\begin{equation*}
m_s(s,t,x)=\rho(s,t,x)v_s(s,t,x), \quad m_t(s,t,x)=\rho(s,t,x)v_t(s,t,x),
\end{equation*}
and write the area element in terms of $m_s,m_t$ and $\rho$, as
\begin{equation*}
A(m_s, m_t,\rho):=\sqrt{\int \frac{\|m_s\|^2}{\rho} dx\cdot \int\frac{\|m_t\|^2}{\rho} dx-(\int \frac{m_s\cdot m_t}{\rho} dx)^2 }.
\end{equation*}
Now, consider
\begin{subequations}\label{mwsn}
\begin{equation}\label{mwsn1}
\inf_{m_s, m_t,\rho}\quad \int_0^1\int_0^1A(m_s, m_t,\rho)ds dt,
\end{equation}
s.t. 
\begin{equation}\label{mwsn2}
\begin{aligned}
&\partial_s\rho+\nabla\cdot m_s=0,\quad \partial_t\rho+\nabla\cdot m_t=0,
\end{aligned}
\end{equation}
\end{subequations}
and introduce Lagrange multipliers $\Phi_s$, $\Phi_t\colon [0,1]^2\times \mathbb{R}^n\rightarrow\mathbb{R}$ for the constraints \eqref{mwsn2}, in the equations for $\partial_s\rho_s$, $\partial_t\rho_t$, respectively. The variational problem \eqref{mwsn} satisfies the following saddle point problem 
\begin{equation*}
\inf_{m_s, m_t, \rho}\sup_{\Phi_s, \Phi_t}\quad \mathcal{L}(m_s,m_t,\rho, \Phi_s, \Phi_t), 
\end{equation*}
where
\begin{equation*}
\begin{split}
 \mathcal{L}(m_s,m_t,\rho, \Phi_s, \Phi_t):=& \quad \int_0^1\int_0^1 A(m_s, m_t, \rho)ds dt\\
&+\int_0^1\int_0^1 \int\Phi_s(\partial_s\rho+\nabla\cdot m_s)+\Phi_t(\partial_t\rho+\nabla\cdot m_t) dx dsdt. 
\end{split}
 \end{equation*}
 The saddle point system satisfies 
 \begin{equation*}
\left\{\begin{aligned}
 \frac{\delta}{\delta \rho}\mathcal{L}=0,\\
  \frac{\delta}{\delta m_s}\mathcal{L}=0,\\
    \frac{\delta}{\delta m_t}\mathcal{L}=0,\\
      \frac{\delta}{\delta \Phi_s}\mathcal{L}=0,\\
        \frac{\delta}{\delta \Phi_t}\mathcal{L}=0,
\end{aligned}\right.\quad \Rightarrow \quad  
\left\{\begin{aligned}
 \frac{\delta}{\delta \rho} A(m_s,m_t, \rho)-\partial_s\Phi_s-\partial_t\Phi_t=0,\\
  \frac{\delta}{\delta m_s}A(m_s,m_t, \rho)-\nabla\Phi_s=0,\\
    \frac{\delta}{\delta m_t}A(m_s,m_t, \rho)-\nabla\Phi_t=0,\\
   \partial_s\rho+\nabla\cdot m_s=0,\\
       \partial_t\rho+\nabla\cdot m_t=0.
\end{aligned}\right.\
 \end{equation*} 
In above, $\frac{\delta}{\delta(\cdot)}(\cdot)$ denotes the $L^2$ first-variation operator. We next derive the first variation operator of $A$. 
Denote $\delta m_s$, $\delta m_t$, and $\delta \rho$ as variations of $m_s$, $m_t$, $\rho$, respectively. Denote $\epsilon\in \mathbb{R}_+$. Thus 
\begin{equation*}
\begin{split}
&\frac{d}{d\epsilon}A(m_s+\epsilon \delta m_s, m_t+\epsilon \delta m_t, \rho+\epsilon \delta \rho)|_{\epsilon=0}\\
=&\frac{d}{d\epsilon}\sqrt{\int \frac{\|m_s+\epsilon \delta m_s\|^2}{\rho+\epsilon \delta\rho} dx\cdot \int\frac{\|m_t+\epsilon \delta m_t\|^2}{\rho+\epsilon \delta\rho} dx-(\int \frac{(m_s+\epsilon \delta m_s)\cdot( m_t+\epsilon \delta m_t)}{\rho+\epsilon \delta\rho} dx)^2 }|_{\epsilon=0}\\
=& \frac{1}{2}A^{-1}\int \Big\{\quad \Big[\frac{2m_s}{\rho}\int\frac{\|m_t\|^2}{\rho}dx-\frac{2m_t}{\rho}\int\frac{m_s\cdot m_t}{\rho}dx\Big]\cdot\delta m_s\\
&\hspace{1.5cm}+\Big[\frac{2m_t}{\rho}\int\frac{\|m_s\|^2}{\rho}dx-\frac{2m_s}{\rho}\int\frac{m_s\cdot m_t}{\rho}dx\Big]\cdot\delta m_t \\
&\hspace{1.5cm}+\Big[-\frac{\|m_s\|^2}{\rho^2}\int\frac{\|m_t\|^2}{\rho}dx -\frac{\|m_t\|^2}{\rho^2}\int\frac{\|m_s\|^2}{\rho}dx +2\frac{m_t\cdot m_s}{\rho^2}\int\frac{m_t\cdot m_s}{\rho} dx\Big]\delta\rho\Big\}dx.
\end{split}
\end{equation*} 
Using the fact that $\frac{d}{d\epsilon}A(m_s+\epsilon \delta m_s, m_t+\epsilon \delta m_t, \rho+\epsilon \delta \rho)|_{\epsilon=0}=\int \frac{\delta}{\delta m_s}A \cdot \delta m_s+\frac{\delta}{\delta m_t}A\cdot\delta m_t+\frac{\delta}{\delta \rho}A\cdot \delta \rho dx$, we obtain  
\begin{equation*}
\left\{\begin{aligned}
\frac{\delta}{\delta m_s}A=&A^{-1}\Big[\frac{m_s}{\rho}\int\frac{\|m_t\|^2}{\rho}dx-\frac{m_t}{\rho}\int\frac{m_s\cdot m_t}{\rho}dx\Big],\\
\frac{\delta}{\delta m_t}A=&A^{-1}\Big[\frac{m_t}{\rho}\int\frac{\|m_s\|^2}{\rho}dx-\frac{m_s}{\rho}\int\frac{m_s\cdot m_t}{\rho}dx\Big],\\
\frac{\delta}{\delta \rho}A=&A^{-1}\Big[-\frac{\|m_s\|^2}{2\rho^2}\int\frac{\|m_t\|^2}{\rho}dx -\frac{\|m_t\|^2}{2\rho^2}\int\frac{\|m_s\|^2}{\rho}dx +\frac{m_t\cdot m_s}{\rho^2}\int\frac{m_t\cdot m_s}{\rho} dx\Big].
\end{aligned}\right.
\end{equation*}
We derive the critical-point system by using the fact again that $m_s=\rho_s v_s$, $m_t=\rho_tv_t$.  
\end{proof}

\subsection{Minimal surfaces in density space, II}
A somewhat more restrictive but potentially more natural formulation consists in restricting attention to gradient vector fields as is customary in Wasserstein manifolds.
We proceed to explain and explore this.

Our construction here seeks vector fields $v_s$, $v_t$ being gradient vector fields. That is, it postulates the existence of potential functions $\Psi_s$, $\Psi_t$, such that $v_s=\nabla\Psi_s$ and $v_t=\nabla\Psi_t$. Thus, our problem is to find potential functions $\Psi_s$, $\Psi_t$, such that the surface area enclosed by Wasserstein geodesics is minimized--the same Wasserstein rectangular curves from subsection \ref{minimal}.
\black


\begin{problem}[Minimal surfaces on Wasserstein manifold]\label{prob2}
{\em 
Consider two-parameter families of probability densities $\rho\colon [0,1]^2\times \mathbb{R}^n\rightarrow \mathbb{R}$, and define the following minimization problem:    
\begin{subequations}\label{mwss}
\begin{equation}\label{mwss1}
\inf_{\Psi_s, \Psi_t,\rho}\quad \int_0^1\int_0^1\sqrt{\int \|\nabla\Psi_s\|^2\rho dx\cdot \int\|\nabla\Psi_t\|^2\rho dx-(\int \nabla\Psi_s\cdot \nabla\Psi_t\rho dx)^2 }ds dt,
\end{equation}
where the minimum is taken among density surfaces $\rho(s,t,\cdot)\in \mathcal{P}_2(\mathbb{R}^n)$, $s, t\in [0,1]$, and corresponding vector fields $\nabla\Psi_s$, 
$\nabla\Psi_t\colon [0,1]^2\times\mathbb{R}^n\rightarrow\mathbb{R}^n$, such that two continuity equations hold 
\begin{equation}\label{mwss2}
\begin{aligned}
&\partial_s\rho(s,t,x)+\nabla\cdot(\rho(s,t,x) \nabla\Psi_s(s,t,x))=0,\\
&\partial_t\rho(s,t,x)+\nabla\cdot(\rho(s,t,x) \nabla\Psi_t(s,t,x))=0,
\end{aligned}
\end{equation}
with fixed boundaries in the probability density space: 
\begin{equation*}   \textrm{$\rho(0, t,\cdot)$, $\rho(1, t,\cdot)$, $\rho(s, 0,\cdot)$, $\rho(s,1,\cdot)$, where $s, t\in [0,1]$.}
\end{equation*}
\end{subequations}
}
\end{problem}
Similarly, as before, we derive minimal surface equations in the form of the critical-point system of equations for our variational problem \eqref{mwss}. 
\begin{proposition}{Consider functions  $\rho,\Psi,\Phi,\tilde\Psi,\tilde\Phi$, in parameters $(s,t,x)$, from $[0,1]\times[0,1]\times \mathbb{R}^n\rightarrow\mathbb{R}$,
and define}
\begin{equation*}
B(\Psi_s, \Psi_t,\rho):=\sqrt{\int {\|\nabla\Psi_s\|^2}{\rho} dx\cdot \int{\|\nabla\Psi_t\|^2}{\rho} dx-(\int {\nabla\Psi_s\cdot \nabla\Psi_t}{\rho} dx)^2 },
\end{equation*}
If the following equations hold,
\begin{equation*}
\left\{\begin{aligned}
  &B^{-1}\nabla\cdot\Big(\rho \Big[\nabla\Psi_s\int {\|\nabla\Psi_t\|^2}{\rho}dx-\nabla\Psi_t\int {\nabla\Psi_s\cdot \nabla\Psi_t}{\rho}dx\Big]\Big)=\nabla\cdot(\rho\nabla\tilde\Phi_s),\\
&B^{-1}\nabla\cdot\Big(\rho\Big[\nabla\Psi_t\int {\|\nabla\Psi_s\|^2}{\rho}dx-\nabla\Psi_s\int {\nabla\Psi_s\cdot \nabla\Psi_t}{\rho}dx\Big]\Big)=\nabla\cdot(\rho\nabla\tilde\Phi_t), 
\end{aligned}\right.
\end{equation*}
and 
\begin{equation*}
\left\{\begin{aligned}
&\partial_s\tilde\Phi_s+\partial_t\tilde\Phi_t+(\nabla\tilde\Phi_s, \nabla\Psi_s)+(\nabla\tilde\Phi_t, \nabla\Psi_t)\\
&\hspace{2cm}-B^{-1}\Big[\frac{1}{2}{\|\nabla\Psi_s\|^2}\int{\|\nabla\Psi_t\|^2}{\rho}dx +\frac{1}{2}{\|\nabla\Psi_t\|^2}\int{\|\nabla\Psi_s\|^2}{\rho}dx\\
&\hspace{3.5cm}-{\nabla\Psi_t\cdot \nabla\Psi_s}\int{\nabla\Psi_t\cdot \nabla\Psi_s}{\rho} dx\Big]=0,\\
&   \partial_s\rho+\nabla\cdot (\rho_s\nabla\Psi_s)=0,\\
&       \partial_t\rho+\nabla\cdot (\rho_t\nabla\Psi_t)=0,
\end{aligned}\right.\
 \end{equation*}
 {then $\rho,\Psi,\Phi$ solve Problem \ref{prob2}, which is a critical-point system of variational problem \eqref{mwss}.}
\end{proposition}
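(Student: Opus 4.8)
The plan is to replay the Lagrangian/saddle-point computation used in the preceding proposition, changing only one structural feature: the admissible velocity fields are now constrained to be gradients, $v_s=\nabla\Psi_s$ and $v_t=\nabla\Psi_t$, so that the independent variables become the triple $(\rho,\Psi_s,\Psi_t)$ rather than $(\rho,m_s,m_t)$. Concretely, I would introduce Lagrange multipliers $\tilde\Phi_s,\tilde\Phi_t\colon[0,1]^2\times\mathbb{R}^n\to\mathbb{R}$ for the two continuity equations in \eqref{mwss2} and form
\begin{equation*}
\begin{split}
\mathcal{L}=&\int_0^1\int_0^1 B(\Psi_s,\Psi_t,\rho)\,ds\,dt\\
&+\int_0^1\int_0^1\int\Big[\tilde\Phi_s\big(\partial_s\rho+\nabla\cdot(\rho\nabla\Psi_s)\big)+\tilde\Phi_t\big(\partial_t\rho+\nabla\cdot(\rho\nabla\Psi_t)\big)\Big]\,dx\,ds\,dt.
\end{split}
\end{equation*}
The Euler--Lagrange system is then $\frac{\delta}{\delta(\cdot)}\mathcal{L}=0$ for each of the five arguments; the equations $\frac{\delta}{\delta\tilde\Phi_s}\mathcal{L}=\frac{\delta}{\delta\tilde\Phi_t}\mathcal{L}=0$ reproduce the two continuity equations verbatim, so it suffices to show that the remaining three variations -- in $\Psi_s$, $\Psi_t$, and $\rho$ -- are exactly the other equations listed in the statement; any tuple solving them is then a critical point of $\mathcal{L}$, equivalently of \eqref{mwss}.

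For the $\Psi_s$-variation, $\delta\Psi_s$ enters $B$ only through $\nabla\delta\Psi_s$ and enters the constraint only through $\nabla\cdot(\rho\nabla\delta\Psi_s)$. Integrating by parts in $x$ once in the $B$-term and twice in the constraint term -- legitimate by spatial decay of $\rho\nabla\Psi$ and $\rho\nabla\tilde\Phi$, and because $\delta\Psi_s$ is free in the interior -- converts $\frac{d}{d\epsilon}B\big|_{\epsilon=0}$ into
\begin{equation*}
\begin{split}
-\int B^{-1}\nabla\cdot\Big(\rho\Big[&\nabla\Psi_s\int\|\nabla\Psi_t\|^2\rho\,dx\\
&-\nabla\Psi_t\int\nabla\Psi_s\cdot\nabla\Psi_t\,\rho\,dx\Big]\Big)\,\delta\Psi_s\,dx,
\end{split}
\end{equation*}
while the constraint contributes $\int\nabla\cdot(\rho\nabla\tilde\Phi_s)\,\delta\Psi_s\,dx$; equating them yields the first displayed equation of the statement, and the $\Psi_t$-variation gives the $s\leftrightarrow t$ symmetric one. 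Equivalently, one may shortcut this by reusing the formulas for $\frac{\delta}{\delta m_s}A$, $\frac{\delta}{\delta m_t}A$ already obtained in the preceding proof, together with the chain rule $\frac{\delta}{\delta\Psi_s}B=-\nabla\cdot\big(\rho\,\frac{\delta}{\delta m_s}A\big|_{m=\rho\nabla\Psi}\big)$.

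For the $\rho$-variation, the essential point is that $\rho$ enters $B$ both explicitly and through the momenta $m_s=\rho\nabla\Psi_s$, $m_t=\rho\nabla\Psi_t$, so by the chain rule
\begin{equation*}
\frac{\delta}{\delta\rho}B=\Big(\frac{\delta}{\delta\rho}A+\nabla\Psi_s\cdot\frac{\delta}{\delta m_s}A+\nabla\Psi_t\cdot\frac{\delta}{\delta m_t}A\Big)\Big|_{m=\rho\nabla\Psi}.
\end{equation*}
Plugging in the three first-variation formulas of the preceding proof and collecting terms produces precisely the bracket
\begin{equation*}
\begin{split}
B^{-1}\Big[&\tfrac12\|\nabla\Psi_s\|^2\int\|\nabla\Psi_t\|^2\rho\,dx+\tfrac12\|\nabla\Psi_t\|^2\int\|\nabla\Psi_s\|^2\rho\,dx\\
&-\nabla\Psi_s\cdot\nabla\Psi_t\int\nabla\Psi_s\cdot\nabla\Psi_t\,\rho\,dx\Big]
\end{split}
\end{equation*}
appearing in the statement -- the $\tfrac12$-versus-$1$ weighting being exactly the fingerprint of this chain rule. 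On the constraint side, varying $\rho$ in $\int_0^1\int_0^1\int\tilde\Phi_s(\partial_s\rho+\nabla\cdot(\rho\nabla\Psi_s))$ and integrating by parts once in $s$ (valid since $\delta\rho$ vanishes on the fixed parameter boundary $\partial([0,1]^2)$) and once in $x$ yields $-\partial_s\tilde\Phi_s-(\nabla\tilde\Phi_s,\nabla\Psi_s)$, with the analogous $t$-contribution; setting $\frac{\delta}{\delta\rho}\mathcal{L}=0$ and rearranging gives the third displayed equation.

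The step I expect to be the main obstacle is precisely the bookkeeping in this last $\rho$-variation: one must not double-count the $\rho$-dependence hidden inside $m=\rho\nabla\Psi$, must keep under control every boundary term produced by the spatial and the $(s,t)$ integrations by parts (via spatial decay of $\rho\nabla\Psi$, $\rho\nabla\tilde\Phi$ and the pinned boundary data on $\partial([0,1]^2)$), and must track all signs so that the $\tfrac12$ coefficients and the cross term $(\nabla\tilde\Phi,\nabla\Psi)$ emerge exactly as written; everything else is a faithful transcription of the preceding proposition's argument, specialized from $m=\rho v$ to $m=\rho\nabla\Psi$. I would also record a minor typographical correction: the last two equations in the statement should read $\partial_s\rho+\nabla\cdot(\rho\nabla\Psi_s)=0$ and $\partial_t\rho+\nabla\cdot(\rho\nabla\Psi_t)=0$, i.e.\ with $\rho$ in place of $\rho_s,\rho_t$, consistent with \eqref{mwss2}.
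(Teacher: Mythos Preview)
Your proposal is correct and follows essentially the same saddle-point/Lagrangian route as the paper: introduce multipliers $\tilde\Phi_s,\tilde\Phi_t$, form $\mathcal{L}_1$, and set each of the five first variations to zero. The one place you deviate is in computing $\frac{\delta}{\delta\rho}B$ via the chain rule through $m=\rho\nabla\Psi$ and the already-derived $\frac{\delta}{\delta\rho}A,\frac{\delta}{\delta m_s}A,\frac{\delta}{\delta m_t}A$; the paper instead redoes the $\epsilon$-differentiation of $B(\Psi_s+\epsilon\delta\Psi_s,\Psi_t+\epsilon\delta\Psi_t,\rho+\epsilon\delta\rho)$ from scratch. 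Your shortcut is a legitimate and slightly more economical way to obtain the same $\tfrac12$-weighted bracket, and your observation about the $\rho_s,\rho_t$ typo is accurate.
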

\begin{proof}
As before, consider
\begin{subequations}\label{mwssn}
\begin{equation}\label{mwssn1}
\inf_{\Psi_s, \Psi_t,\rho}\quad \int_0^1\int_0^1B(\Psi_s, \Psi_t,\rho)ds dt,
\end{equation}
s.t. 
\begin{equation}\label{mwssn2}
\begin{aligned}
&\partial_s\rho+\nabla\cdot (\rho\nabla\Psi_s)=0,\quad \partial_t\rho+\nabla\cdot(\rho\nabla\Psi_t)=0. 
\end{aligned}
\end{equation}
\end{subequations}
Introduce Lagrange multipliers $\tilde\Phi_s$, $\tilde\Phi_t\colon [0,1]^2\times \mathbb{R}^n\rightarrow\mathbb{R}$ for the constraints \eqref{mwssn2}. 
Thus, the variational problem \eqref{mwssn} leads to the following saddle point problem 
\begin{equation*}
\inf_{\Psi_s, \Psi_t, \rho}\sup_{\tilde\Phi_s, \tilde\Phi_t}\quad \mathcal{L}_1(\Psi_s,\Psi_t,\rho, \tilde\Phi_s, \tilde\Phi_t), 
\end{equation*}
where
\begin{equation*}
\begin{split}
 \mathcal{L}_1(\Psi_s,\Psi_t,\rho, \tilde\Phi_s, \tilde\Phi_t):=& \quad \int_0^1\int_0^1 B(\Psi_s, \Psi_t, \rho)ds dt\\
&+\int_0^1\int_0^1 \int\tilde\Phi_s(\partial_s\rho+\nabla\cdot (\rho\nabla\Psi_s))+\tilde\Phi_t(\partial_t\rho+\nabla\cdot (\rho\nabla\Psi_t)) dx dsdt. 
\end{split}
 \end{equation*}
 The saddle point system is 
 \begin{equation*}
\left\{\begin{aligned}
 \frac{\delta}{\delta \rho}\mathcal{L}_1=0,\\
  \frac{\delta}{\delta \Psi_s}\mathcal{L}_1=0,\\
    \frac{\delta}{\delta \Psi_t}\mathcal{L}_1=0,\\
      \frac{\delta}{\delta \tilde\Phi_s}\mathcal{L}_1=0,\\
        \frac{\delta}{\delta \tilde\Phi_t}\mathcal{L}_1=0,
\end{aligned}\right.\quad \Rightarrow \quad  
\left\{\begin{aligned}
 \frac{\delta}{\delta \rho} B(\Psi_s,\Psi_t, \rho)-(\nabla\tilde\Phi_s,\nabla\Psi_s)-(\nabla\tilde\Phi_t, \nabla\Psi_t)-\partial_s\tilde\Phi_s-\partial_t\tilde\Phi_t=0,\\
  \frac{\delta}{\delta \Psi_s}B(\Psi_s,\Psi_t, \rho)+\nabla\cdot(\rho \nabla\tilde\Phi_s)=0,\\
    \frac{\delta}{\delta \Psi_t}B(\Psi_s,\Psi_t, \rho)+\nabla\cdot(\rho \nabla\tilde\Phi_t)=0,\\
   \partial_s\rho+\nabla\cdot (\rho \nabla\Psi_s)=0,\\
       \partial_t\rho+\nabla\cdot (\rho\nabla\Psi_t)=0.
\end{aligned}\right.\
 \end{equation*} 
We next derive the first variation operator of $B$. 
Denote $\delta \Psi_s$, $\delta \Psi_t$, and $\delta \rho$ as variations of $\Psi_s$, $\Psi_t$, $\rho$, respectively. Denote $\epsilon\in \mathbb{R}_+$. Thus,
\begin{equation*}
\begin{split}
&\frac{d}{d\epsilon}B(\Psi_s+\epsilon \delta \Psi_s, \Psi_t+\epsilon \delta \Psi_t, \rho+\epsilon \delta \rho)|_{\epsilon=0}\\
=&\frac{d}{d\epsilon}\Big(\int {\|\nabla\Psi_s+\epsilon \nabla\delta\Psi_s\|^2}{(\rho+\epsilon \delta\rho)} dx\cdot \int{\|\nabla\Psi_t+\epsilon \delta \Psi_t\|^2}{(\rho+\epsilon \delta\rho)} dx\\
&\hspace{2cm}-(\int {(\nabla\Psi_s+\epsilon \nabla\delta \Psi_s)\cdot( \nabla\Psi_t+\epsilon \nabla \delta \Psi_t)}{(\rho+\epsilon \delta\rho}) dx)^2 \Big)^{\frac{1}{2}}|_{\epsilon=0}\\
=& \frac{1}{2}B(\Psi_s,\Psi_t,\rho)^{-1}\int \Big\{\quad \Big[2\nabla\Psi_s\int {\|\nabla\Psi_t\|^2}{\rho}dx-{2\nabla\Psi_t}\int{\nabla\Psi_s\cdot \nabla\Psi_t}{\rho}dx\Big]\cdot \nabla\delta \Psi_s \rho\\
&\hspace{3.5cm}+\Big[{2\nabla\Psi_t}\int{\|\nabla\Psi_s\|^2}{\rho}dx-{2\nabla\Psi_s}\int{\nabla\Psi_s\cdot \nabla\Psi_t}{\rho}dx\Big]\cdot \nabla\delta\Psi_t \rho \\
&\hspace{3.5cm}+\Big[{\|\nabla\Psi_s\|^2}\int{\|\nabla\Psi_t\|^2}{\rho}dx +{\|\nabla\Psi_t\|^2}\int{\|\nabla\Psi_s\|^2}{\rho}dx \\
&\hspace{4.2cm}-2{\nabla\Psi_t\cdot \nabla\Psi_s}\int{\nabla\Psi_t\cdot \nabla\Psi_s}{\rho} dx\Big]\delta\rho\Big\}dx.
\end{split}
\end{equation*} 
Using the fact that $\frac{d}{d\epsilon}B(\Psi_s+\epsilon \delta \Psi_s, \Psi_t+\epsilon \delta \Psi_t, \rho+\epsilon \delta \rho)|_{\epsilon=0}=\int \frac{\delta}{\delta \Psi_s}B \cdot \delta \Psi_s+\frac{\delta}{\delta \Psi_t}B\cdot\delta \Psi_t+\frac{\delta}{\delta \rho}B\cdot \delta \rho dx$, we derive  
\begin{equation*}
\left\{\begin{aligned}
\frac{\delta}{\delta \Psi_s}B=&-B^{-1}\nabla\cdot\Big(\rho \Big[{\nabla\Psi_s}\int{\|\nabla\Psi_t\|^2}{\rho}dx-{\nabla\Psi_t}\int{\nabla\Psi_s\cdot \nabla\Psi_t}{\rho}dx\Big]\Big),\\
\frac{\delta}{\delta \Psi_t}B=&-B^{-1}\nabla\cdot\Big(\rho \Big[{\nabla\Psi_t}\int{\|\nabla\Psi_s\|^2}{\rho}dx-{\nabla\Psi_s}\int{\nabla\Psi_s\cdot \nabla\Psi_t}{\rho}dx\Big]\Big),\\
\frac{\delta}{\delta \rho}B=&B^{-1}\Big[\frac{1}{2}{\|\nabla\Psi_s\|^2}\int{\|\nabla\Psi_t\|^2}{\rho}dx+\frac{1}{2}{\|\nabla\Psi_t\|^2}\int{\|\nabla\Psi_s\|^2}{\rho}dx\\
&\hspace{1cm}-{\nabla\Psi_t\cdot \nabla\Psi_s}\int{\nabla\Psi_t\cdot \nabla\Psi_s}{\rho} dx\Big].
\end{aligned}\right.
\end{equation*}
Thus we derive the  critical-point system.  
\end{proof}

\section{Minimal surface equations in Lagrangian coordinates}\label{sec3}
In this section we present examples of minimal Wasserstein surface equations. We first discuss the simplest case of a one-dimensional sample space. In this case the minimal Wasserstein surface problems, Problems \ref{prob1} and \ref{prob2} (equations \eqref{mws} and \eqref{mwss}), coincide. We next discuss generalized minimal Wasserstein surface equations in high dimensional sample space. 

\subsection{One-dimensional sample space}
\black
{Define a monotone in $x$
mapping $T(s,t,x)$, with $T\colon [0,1]\times [0,1]\times \mathbb{R}^1\rightarrow\mathbb{R}^1$,} that is a mapping such that $\rho(s,t,\cdot)= T(s,t,\cdot)_\# \rho(0,0,\cdot)$, equivalently, 
\begin{equation*}
\rho(s,t, T(s,t,x)) \partial_x T(s,t,x)=\rho(0,0,x). 
\end{equation*}
{In this case, the ``two-parameter mapping-function'' is explicitly computable. Specifically,}
\begin{equation*}
\frac{d}{dx}F_{\rho_{st}}(T(s,t,x))=\rho(0,0,x)=\rho_{00}(x)\quad \Rightarrow \quad F_{\rho_{st}}(T(s,t,x))=F_{\rho_{00}}(x), 
\end{equation*}
where $F_{\rho_{st}}(x)=\int_{-\infty}^x\rho(s,t,y)dy$ and $F_{\rho_{00}}(x)=\int_{-\infty}^x\rho(0,0,y)dy$ are cumulative distributions of $\rho_{st}$, $\rho_{00}$, respectively, and $F_{\rho_{st}}^{-1}$, $F_{\rho_{00}}^{-1}$ are their inverse functions. In other words, 
\begin{equation*}
T(s,t,x)=F_{\rho_{st}}^{-1}(F_{\rho_{00}}(x)). 
\end{equation*}
We now reformulate the minimal surface problem \eqref{mws} or \eqref{mwss} in terms of inverse cumulative functions. 
\begin{proposition}[One-dimensional minimal Wasserstein surfaces]
Variational problem \eqref{mws} or \eqref{mwss} can be formulated as follows:  
\begin{equation}\label{lmws1}
\inf_{F_{\rho_{st}}^{-1}}\quad \int_0^1\int_0^1\sqrt{\int \|\partial_s F^{-1}_{\rho_{st}}(x)\|^2dx\cdot \int\|\partial_t F^{-1}_{\rho_{st}}(x)\|^2 dx-(\int \partial_s F^{-1}_{\rho_{st}}(x)\cdot \partial_t F^{-1}_{\rho_{st}}(x) dx)^2 }ds dt,
\end{equation}
where the minimum is taken amongst all inverse cumulative distributions $F^{-1}_{\rho_{st}}$, $s, t\in [0,1]$, with fixed boundaries: 
\begin{equation*}   \textrm{$F^{-1}_{\rho_{0t}}(\cdot)$, $F^{-1}_{\rho_{1t}}(\cdot)$, $F^{-1}_{\rho_{s0}}(\cdot)$, $F^{-1}_{\rho_{s1}}(\cdot)$, where $s, t\in [0,1]$.}
\end{equation*}
\end{proposition}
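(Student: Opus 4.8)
The plan is to push forward the area functional in \eqref{mws} (equivalently \eqref{mwss}, since in one dimension monotone transport maps are gradients of convex functions and the two problems coincide) through the change of variables $y = T(s,t,x)$, where $T(s,t,\cdot) = F_{\rho_{st}}^{-1}\circ F_{\rho_{00}}$ is the monotone rearrangement of a fixed reference density $\rho_{00}$ onto $\rho(s,t,\cdot)$. The key identity I would establish first is that in one spatial dimension the velocity field along each parameter direction is transported by $T$: differentiating the relation $F_{\rho_{st}}(T(s,t,x)) = F_{\rho_{00}}(x)$ in $s$ (and in $t$) and comparing with the continuity equation \eqref{mws2} gives $v_s(s,t,T(s,t,x)) = \partial_s T(s,t,x)$ and $v_t(s,t,T(s,t,x)) = \partial_t T(s,t,x)$. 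This is the one-dimensional analogue of the fact that in Lagrangian coordinates the velocity is the time derivative of the flow map, and it is the crux of the whole reformulation.

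Next I would substitute this into each of the three integrals appearing under the square root. Using $\rho(s,t,y)\,dy = \rho_{00}(x)\,dx$ as the pushforward/change-of-variables rule, one gets
\begin{equation*}
\int \|v_s\|^2 \rho(s,t,y)\,dy = \int |\partial_s T(s,t,x)|^2 \rho_{00}(x)\,dx,
\end{equation*}
and similarly for $\int \|v_t\|^2\rho\,dy$ and for the cross term $\int v_s\cdot v_t\,\rho\,dy$. Thus the integrand $A(s,t)$ becomes exactly the same Gram-determinant expression but with $T$ in place of $v$ and $\rho_{00}(x)\,dx$ in place of $\rho\,dy$. To reach the clean statement \eqref{lmws1}, I would then specialize the reference measure: take $\rho_{00}$ to be (the pushforward making) the uniform measure, i.e.\ parametrize by the quantile variable $u = F_{\rho_{00}}(x)\in[0,1]$, so that $T(s,t,x) = F_{\rho_{st}}^{-1}(u)$ and $\rho_{00}(x)\,dx = du$. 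Renaming $u$ back to $x$ as an integration variable gives precisely the functional in \eqref{lmws1}, with $\partial_s F_{\rho_{st}}^{-1}$ and $\partial_t F_{\rho_{st}}^{-1}$ in the three slots. Finally I would check that the four boundary curves map correctly: the Wasserstein geodesics bounding the rectangle are, in one dimension, the linear interpolations of inverse CDFs (McCann's displacement interpolation), so the fixed boundaries $\rho(0,t,\cdot)$, etc., correspond exactly to fixed boundaries $F_{\rho_{0t}}^{-1},\dots$ as claimed; and conversely any admissible $F_{\rho_{st}}^{-1}$ with those boundaries yields an admissible triple $(\rho, v_s, v_t)$ by inverting the construction. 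This bijection between admissible configurations, preserving the objective value, is what lets me conclude the two variational problems are equivalent.

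The main obstacle I anticipate is purely the regularity/monotonicity bookkeeping: to make the change of variables legitimate I need $T(s,t,\cdot)$ to be a genuine (a.e.\ differentiable, strictly increasing) transport map for every $(s,t)$, which forces assumptions such as $\rho(s,t,\cdot)>0$ on its support with enough smoothness so that $F_{\rho_{st}}^{-1}$ is $C^1$ in all of $s,t,x$; and I must ensure the vector fields $v_s, v_t$ recovered from $\partial_s T, \partial_t T$ actually solve the continuity equations in the weak sense and are the \emph{gradient} fields singled out in Problem \ref{prob2} (automatic in $n=1$, since every monotone map is the derivative of a convex potential). Modulo these standing smoothness hypotheses — which I would state up front and which are consistent with the ``continuously differentiable density surface'' assumed throughout — the argument is a direct computation, and no genuinely hard estimate is needed.
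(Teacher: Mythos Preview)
Your proposal is correct and follows essentially the same route as the paper: identify $v_s(s,t,T)=\partial_s T$ and $v_t(s,t,T)=\partial_t T$, push the three integrals $\int\|v_s\|^2\rho$, $\int\|v_t\|^2\rho$, $\int v_s\cdot v_t\,\rho$ through the change of variables $y=T(s,t,x)$ to obtain $\rho_{00}$-weighted integrals of $\partial_s T,\partial_t T$, and then substitute the quantile variable $u=F_{\rho_{00}}(x)$ to land on \eqref{lmws1}. Your additional remarks on regularity, the bijection between admissible configurations, and the identification of the boundary geodesics with displacement interpolation of inverse CDFs go somewhat beyond what the paper records but are entirely consistent with it.
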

\begin{proof}
The continuity equation \eqref{mws2} can be reformulated as follows. 
Denote
\begin{equation*}
\frac{\partial}{\partial s}T(s,t,x)=v_s(s,t, T(s,t,x)),\quad  \frac{\partial}{\partial t}T(s,t,x)=v_t(s,t, T(s,t,x)).
\end{equation*}
And the objective functional \eqref{mws1} can be reformulated in terms of inverse cumulative functions,
\begin{equation*}
\begin{split}
\int \|v_s\|^2\rho dx=& \int \|v_s(s,t,y)\|^2\rho(s,t,y)dy\\
=& \int \|v_s(s,t,T(s,t,x))\|^2\rho(s,t,T(s,t,x))dT(s,t,x)\\
=& \int \|v_s(s,t,T(s,t,x))\|^2\rho(s,t,T(s,t,x))\partial_xT(s,t,x)dx\\
=&\int \|\partial_sT(s,t,x)\|^2\rho_{00}(x)dx\\
=&\int \|\partial_s F^{-1}_{\rho_{st}}(F_{\rho_{00}}(x))\|^2dF_{\rho_{00}}(x)\\
=&\int \|\partial_s F^{-1}_{\rho_{st}}(z)\|^2dz, 
\end{split}
\end{equation*}
where we let $y=T(s,t,x)$ in the first equality and $z=F_{\rho_{00}}(x)$ in the last equality. 
Similarly, 
\begin{equation*}
\int \|v_t\|^2\rho dx=\int \|\partial_t F^{-1}_{\rho_{st}}(z)\|^2dz, 
\end{equation*}
and 
\begin{equation*}
\begin{split}
\int v_s\cdot v_t\rho dx=& \int  v_s(s,t,y)\cdot v_t(s,t,y)\rho(s,t,y)dy\\
=& \int v_s(s,t,T(s,t,x))\cdot v_t(s,t, T(s,t,x))\rho(s,t,T(s,t,x))dT(s,t,x)\\
=& \int v_s(s,t,T(s,t,x))\cdot v_t(s,t, T(s,t,x))\rho(s,t,T(s,t,x))\partial_xT(s,t,x)dx\\
=&\int \partial_sT(s,t,x)\cdot \partial_tT(s,t,x)\rho_{00}(x)dx\\
=&\int \partial_s F^{-1}_{\rho_{st}}(F_{\rho_{00}}(x))\cdot \partial_t F^{-1}_{\rho_{st}}(F_{\rho_{00}}(x)) dF_{\rho_{00}}(x)\\
=&\int \partial_s F^{-1}_{\rho_{st}}(z) \cdot \partial_t F^{-1}_{\rho_{st}}(z) dz. 
\end{split}
\end{equation*}
From the above calculations, we recast the variational problem \eqref{mws} into the minimization problem \eqref{lmws1}, in terms of inverse cumulative distributions.  
\end{proof}

We next derive the optimality condition for variational problem \eqref{lmws1}. To simplify the notation, we denote 
\begin{equation*}
Z(s,t,x)=F^{-1}_{\rho_{st}}(x),
\end{equation*}
and write
\begin{equation*}
\begin{split}{C}(s,t,Z)
=\sqrt{\int \|\partial_s Z(s,t,x)\|^2dx\cdot \int\|\partial_t Z(s,t,x)\|^2 dx-(\int \partial_sZ(s,t,x)\cdot \partial_t Z(s,t,x) dx)^2 }.  
\end{split}
\end{equation*}
\begin{proposition}
The Euler-Lagrange equation of variational problem \eqref{lmws1} can be expressed in the form of the following second-order PDE, 
\begin{equation*}
\begin{split}
& \partial_{s}\Big(\partial_sZ \cdot {C}(s,t,Z)^{-1}\int \|\partial_tZ\|^2dx\Big)+ \partial_{t}\Big(\partial_tZ \cdot {C}(s,t,Z)^{-1} \int \|\partial_sZ\|^2dx\Big)\\
-&\partial_t\Big( \partial_{s}Z\cdot {C}(s,t,Z)^{-1}\int \partial_s Z\cdot\partial_tZdx \Big)-\partial_s\Big( \partial_{t}Z\cdot {C}(s,t,Z)^{-1}\int \partial_s Z\cdot\partial_tZdx \Big) =0,
\end{split}
\end{equation*}
with fixed boundary conditions 
\begin{equation*}  
Z(0,t,\cdot), \quad Z(1, t, \cdot), \quad Z(s,0, \cdot), \quad Z(s, 1, \cdot), \quad s,t\in [0,1].
\end{equation*}
\end{proposition}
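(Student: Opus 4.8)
The plan is to derive the stated second-order PDE as the Euler--Lagrange equation of the functional in \eqref{lmws1}, written as $\int_0^1\int_0^1 C(s,t,Z)\,ds\,dt$ with $C$ as defined above. I would treat $Z=Z(s,t,x)$ as the single unknown field (the inverse cumulative distribution), with variations $\delta Z$ that vanish on the four boundary pieces $s\in\{0,1\}$ and $t\in\{0,1\}$ but are otherwise free, including free in $x$. First I would compute $\frac{d}{d\epsilon}\big|_{\epsilon=0} C(s,t,Z+\epsilon\,\delta Z)$ pointwise in $(s,t)$. Since $C$ depends on $Z$ only through $\partial_s Z$ and $\partial_t Z$, and each appears inside the three $x$-integrals $\int\|\partial_s Z\|^2 dx$, $\int\|\partial_t Z\|^2 dx$, $\int \partial_s Z\cdot\partial_t Z\,dx$, the chain rule gives, with $C=C(s,t,Z)$,
\begin{equation*}
\delta C = C^{-1}\int\Big[\Big(\partial_s Z\!\int\!\|\partial_t Z\|^2dx - \partial_t Z\!\int\!\partial_s Z\cdot\partial_t Z\,dx\Big)\cdot\partial_s\delta Z + \Big(\partial_t Z\!\int\!\|\partial_s Z\|^2dx - \partial_s Z\!\int\!\partial_s Z\cdot\partial_t Z\,dx\Big)\cdot\partial_t\delta Z\Big]dx.
\end{equation*}
This is the exact analogue of the first-variation computation for $A$ and $B$ already carried out in the two propositions above, specialized to the Lagrangian coordinate where the measure is the fixed Lebesgue reference $dz=\rho_{00}\,dx$ and the ``momenta'' are simply $\partial_s Z$, $\partial_t Z$; I would point to that parallel rather than redo the $\epsilon$-differentiation of the square root in full.

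Next I would integrate the full functional and use integration by parts in $s$ and in $t$ over $[0,1]^2$. Writing $P_s(s,t,x):=C^{-1}\big(\partial_s Z\int\|\partial_t Z\|^2dx-\partial_t Z\int\partial_s Z\cdot\partial_t Z\,dx\big)$ and $P_t(s,t,x):=C^{-1}\big(\partial_t Z\int\|\partial_s Z\|^2dx-\partial_s Z\int\partial_s Z\cdot\partial_t Z\,dx\big)$, the first variation of the objective is $\int_0^1\!\int_0^1\!\int \big(P_s\cdot\partial_s\delta Z + P_t\cdot\partial_t\delta Z\big)\,dx\,ds\,dt$. Integrating by parts in the $s$-variable in the first term and in the $t$-variable in the second, the boundary terms at $s\in\{0,1\}$ and $t\in\{0,1\}$ vanish because $\delta Z$ is fixed there, leaving $-\int_0^1\!\int_0^1\!\int \big(\partial_s P_s + \partial_t P_t\big)\cdot\delta Z\,dx\,ds\,dt$. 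Since $\delta Z$ is an arbitrary admissible variation (in $(s,t,x)$), the fundamental lemma of the calculus of variations forces $\partial_s P_s + \partial_t P_t = 0$ pointwise, which upon expanding $P_s$ and $P_t$ is exactly the displayed PDE:
\begin{equation*}
\partial_s\Big(\partial_s Z\cdot C^{-1}\!\int\!\|\partial_t Z\|^2dx\Big)+\partial_t\Big(\partial_t Z\cdot C^{-1}\!\int\!\|\partial_s Z\|^2dx\Big)-\partial_t\Big(\partial_s Z\cdot C^{-1}\!\int\!\partial_s Z\cdot\partial_t Z\,dx\Big)-\partial_s\Big(\partial_t Z\cdot C^{-1}\!\int\!\partial_s Z\cdot\partial_t Z\,dx\Big)=0.
\end{equation*}
The prescribed boundary data $Z(0,t,\cdot),Z(1,t,\cdot),Z(s,0,\cdot),Z(s,1,\cdot)$ are inherited directly from the fixed-boundary constraints of \eqref{lmws1}.

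The main obstacle, and the only place where care beyond bookkeeping is needed, is justifying the interchange of the $\epsilon$-derivative with the $x$-integrals inside the square root and ensuring $C>0$ so that $C^{-1}$ makes sense along the variation — i.e., that the Gram determinant $\int\|\partial_s Z\|^2\int\|\partial_t Z\|^2-(\int\partial_s Z\cdot\partial_t Z)^2$ stays strictly positive, which is the nondegeneracy (immersion) condition for the surface and should be assumed as a regularity hypothesis on admissible $Z$, exactly as smoothness and non-degeneracy are tacitly assumed for $\gamma$ in the Euclidean case \eqref{E_min_surface}. A secondary, purely notational subtlety is that $\delta Z$ need not vanish as $|x|\to\infty$ in the same way $v_s,v_t$ decay; but since no integration by parts in $x$ is performed here (all the by-parts manipulations are in $s$ and $t$), no spatial boundary terms arise and this causes no difficulty. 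I would therefore state the regularity/non-degeneracy assumptions explicitly at the start of the proof, invoke the first-variation formula for the area element already established, integrate by parts in $(s,t)$, and conclude by the fundamental lemma.
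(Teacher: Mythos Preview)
Your proposal is correct and follows essentially the same approach as the paper: compute the first variation of $C$ by differentiating under the square root, integrate by parts in $s$ and $t$, discard the boundary terms using $\delta Z=0$ on the four sides, and invoke the fundamental lemma. Your introduction of the shorthand $P_s,P_t$ and your explicit remarks on the non-degeneracy hypothesis $C>0$ and on why no $x$-integration by parts is needed are helpful additions, but the underlying argument is the same as the paper's.
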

\begin{proof}
We derive the Euler-Lagrange equation by computing 
\begin{equation*}
\frac{d}{d\epsilon}\int_0^1\int_0^1{C}(s,t,Z+\epsilon \delta Z)dsdt|_{\epsilon=0}=0,
\end{equation*}
where $\epsilon \in\mathbb{R}_+$ and $\delta Z\in C^{\infty}([0,1]^2, \mathbb{R})$ is a smooth testing function with $\delta Z(0, t, x)=\delta Z(1, t,x)=\delta Z(s, 0,x)=\delta Z(s,1,x)=0$ for $s,t\in [0,1]$. 
Note that 
\begin{equation*}
\begin{split}
&\frac{d}{d\epsilon}\int_0^1\int_0^1{C}(s,t, Z+\epsilon \delta Z)dsdt|_{\epsilon=0}\\
=&\frac{d}{d\epsilon}\int_0^1\int_0^1\Big(\int \|\partial_s Z+\epsilon \partial_s\delta Z\|^2dx\cdot \int\|\partial_t Z+\epsilon\partial_t\delta Z\|^2 dx\\
&\hspace{2cm}-(\int [\partial_sZ+\epsilon\partial_s\delta Z]\cdot [\partial_t Z+\epsilon\partial_t\delta Z] dx)^2 \Big)^{\frac{1}{2}}ds dt|_{\epsilon=0}\\
=&\frac{1}{2}\int_0^1\int_0^1{C}(s,t,Z)^{-1}\Big(2\int \partial_sZ \cdot\partial_s\delta Z dx\int \|\partial_tZ\|^2dx+2\int \partial_tZ \cdot\partial_t\delta Z dx\int \|\partial_sZ\|^2dx\\
&\hspace{4cm}-2(\int \partial_s Z\cdot\partial_tZdx)\cdot \int (\partial_s\delta Z \cdot \partial_tZ +\partial_t\delta Z \cdot \partial_sZ )dx \Big) dsdt. 
\end{split}
\end{equation*}
By applying integration by parts and choosing any smooth test function $\delta Z$, we derive the Euler-Lagrange equation. 
\end{proof}

\subsection{Wasserstein surfaces in higher dimensional spaces}
We next study the minimal Wasserstein surface problems in Lagrangian coordinates for general high-dimensional spaces. In this case, the variational problems (Problems \ref{prob1} and \ref{prob2}) no longer coincide. For the simplicity of exposition, we only work out the case of Problem \ref{prob1}. 

Consider $T\colon [0,1]^2\times\mathbb R^n\rightarrow\mathbb{R}^n$, such that $\rho(s,t,\cdot)= T(s,t,\cdot)_\# \rho(0,0,\cdot)$, equivalently,
\begin{equation*}
\rho(s,t, T(s,t,x)) \mathrm{det}(\nabla_xT(s,t,x))=\rho(0,0,x). 
\end{equation*}
We now reformulate the minimal surface problem \eqref{mws} or \eqref{mwss} in terms of pushforward mapping functions. 
\begin{proposition}[Minimal Wasserstein surface problems in Lagrangian coordinates]
The variational problem \eqref{mws} is equivalent to the following minimization problem:  
\begin{equation}\label{llmws1}
\begin{split}
&\inf_{T}\quad \int_0^1\int_0^1\Big(\int \|\partial_s T(s,t,x)\|^2\rho_{00}(x)dx\cdot \int\|\partial_t T(s,t,x)\|^2\rho_{00}(x) dx\\
&\hspace{3cm}-(\int \partial_s T(s,t,x)\cdot \partial_t T(s,t,x) \rho_{00}(x)dx)^2 \Big)^{\frac{1}{2}}ds dt,
\end{split}
\end{equation}
where the minimum is among all mapping functions $T(s,t,x)$, $s, t\in [0,1]$, with fixed boundary conditions: 
\begin{equation*}   
T(0,t,\cdot)_\#\rho_{00}=\rho_{0t}, \quad T(1, t, \cdot)_\#\rho_{00}=\rho_{1t}, \quad T(s,0, \cdot)_\#\rho_{00}=\rho_{s0}, \quad T(s, 1, \cdot)_\#\rho_{00}=\rho_{s1}, \quad s,t\in [0,1].
\end{equation*}
\end{proposition}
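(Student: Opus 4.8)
The plan is to follow the one-dimensional proposition, with the scalar Jacobian $\partial_xT$ replaced by $\det(\nabla_xT)$. First I would fix the dictionary between the Eulerian data of Problem \ref{prob1} and a Lagrangian map $T$: given a feasible triple $(\rho,v_s,v_t)$ for \eqref{mws}, define $T$ by integrating the flow
\[
\partial_sT(s,t,x)=v_s(s,t,T(s,t,x)),\qquad \partial_tT(s,t,x)=v_t(s,t,T(s,t,x)),
\]
normalized by $T(0,0,\cdot)=\mathrm{id}$; conversely, from $T$ recover $\rho(s,t,\cdot)=T(s,t,\cdot)_\#\rho_{00}$, $v_s=(\partial_sT)\circ T^{-1}$, $v_t=(\partial_tT)\circ T^{-1}$. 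The two continuity equations in \eqref{mws2} are precisely mass conservation along this flow, i.e.\ they are equivalent to the Jacobian relation $\rho(s,t,T(s,t,x))\det(\nabla_xT(s,t,x))=\rho_{00}(x)$ together with the above definitions of $v_s,v_t$ --- the same Eulerian--Lagrangian correspondence already used in the proof of the one-dimensional case.

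Next I would carry out the change of variables $y=T(s,t,x)$ (with $s,t$ held fixed), so $dy=\det(\nabla_xT(s,t,x))\,dx$. Using the Jacobian relation and $\partial_sT=v_s\circ T$,
\[
\int\|v_s\|^2\rho\,dy=\int\|v_s(s,t,T)\|^2\,\rho(s,t,T)\,\det(\nabla_xT)\,dx=\int\|\partial_sT(s,t,x)\|^2\,\rho_{00}(x)\,dx,
\]
and similarly $\int\|v_t\|^2\rho\,dy=\int\|\partial_tT\|^2\rho_{00}\,dx$ and $\int v_s\cdot v_t\,\rho\,dy=\int\partial_sT\cdot\partial_tT\,\rho_{00}\,dx$. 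Substituting these three identities termwise into the integrand of \eqref{mws1} turns it into the integrand of \eqref{llmws1}, so the objective values agree along the correspondence. I would then observe that the boundary constraints match: $\rho(0,t,\cdot)=\rho_{0t}$ is equivalent to $T(0,t,\cdot)_\#\rho_{00}=\rho_{0t}$, and likewise for the three remaining sides, which are exactly the constraints listed under \eqref{llmws1}. Hence every feasible $T$ yields a feasible triple of equal cost and conversely, and the infima coincide.

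The delicate point --- the one I would spend the most care on --- is the well-posedness of the reverse direction: a single two-parameter $T$ can satisfy both $\partial_sT=v_s\circ T$ and $\partial_tT=v_t\circ T$ only when the zero-curvature condition $\partial_tv_s+(\nabla v_s)v_t=\partial_sv_t+(\nabla v_t)v_s$ holds, which is not imposed in Problem \ref{prob1}. For the asserted equivalence of \emph{variational problems} this is not an obstruction: for each $(s,t)$ one may take $T(s,t,\cdot)$ to be the (Brenier/monotone) transport map from $\rho_{00}$ to the prescribed density $\rho(s,t,\cdot)$, which exists for every admissible $\rho$ and is jointly smooth under the standing regularity assumptions; its partials $\partial_sT,\partial_tT$ are admissible velocity representatives realizing the very same area element $A(s,t)$, so the Lagrangian problem \eqref{llmws1} attains the same value as \eqref{mws}. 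In one dimension this map is the unique $T$, which is why Problems \ref{prob1} and \ref{prob2} coincide there. The remaining steps --- invertibility and sufficient regularity of $T$, and differentiation under the integral --- are routine.
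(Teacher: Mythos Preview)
Your change-of-variables computation is exactly the paper's argument: substitute $y=T(s,t,x)$, use $\rho(s,t,T)\det(\nabla_xT)=\rho_{00}$ together with $\partial_sT=v_s\circ T$, $\partial_tT=v_t\circ T$, and read off the three quadratic integrals. The paper's proof stops there and does not discuss the compatibility issue you raise.

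You are right to flag that a single two-parameter flow $T$ realizing both $\partial_sT=v_s\circ T$ and $\partial_tT=v_t\circ T$ exists only under the zero-curvature condition on $(v_s,v_t)$, which Problem~\ref{prob1} does not impose. But your proposed resolution does not work. If you take $T(s,t,\cdot)$ to be the Brenier map from $\rho_{00}$ to $\rho(s,t,\cdot)$, the induced velocities $v_s'=(\partial_sT)\circ T^{-1}$, $v_t'=(\partial_tT)\circ T^{-1}$ are in general \emph{different} from the original $(v_s,v_t)$, and the area element $A(s,t)$ depends on the specific velocity fields, not merely on $\rho$ and $\partial_s\rho,\partial_t\rho$: one may add any $\rho$-divergence-free field to $v_s$ without changing the continuity equation while changing $\int\|v_s\|^2\rho$ and $\int v_s\cdot v_t\,\rho$. (Concretely, take $\rho$ independent of $(s,t)$ and $v_s,v_t$ nonzero divergence-free; your Brenier $T$ is the identity with zero area, whereas the given triple has positive area.) Nor are the Brenier-induced $v_s',v_t'$ gradients in general, so this is not the same as passing to Problem~\ref{prob2}. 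Thus your construction does not show the two objectives agree; as written, only the direction $T\mapsto(v_s,v_t,\rho)$ is justified, giving $\inf_{\eqref{mws}}\le\inf_{\eqref{llmws1}}$. Establishing the reverse inequality---equivalently, that the compatibility constraint is costless at the level of infima---would require a separate argument that neither you nor the paper supplies.
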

\begin{proof}
The proof follows from a similar argument in one dimension space. The continuity equation \eqref{mws2} can be reformulated as below. 
Denote
\begin{equation*}
\frac{\partial}{\partial s}T(s,t,x)=v_s(s,t, T(s,t,x)),\quad  \frac{\partial}{\partial t}T(s,t,x)=v_t(s,t, T(s,t,x)).
\end{equation*}
And the objective functional \eqref{mws1} can be reformulated in terms of mapping functions. In other words, 
\begin{equation*}
\begin{split}
\int \|v_s\|^2\rho dx=& \int \|v_s(s,t,y)\|^2\rho(s,t,y)dy\\
=& \int \|v_s(s,t,T(s,t,x))\|^2\rho(s,t,T(s,t,x))dT(s,t,x)\\
=& \int \|v_s(s,t,T(s,t,x))\|^2\rho(s,t,T(s,t,x))\textrm{det}(\nabla_xT(s,t,x))dx\\
=&\int \|\partial_sT(s,t,x)\|^2\rho_{00}(x)dx, 
\end{split}
\end{equation*}
where we let $y=T(s,t,x)$ in the first equality. 
Similarly, 
\begin{equation*}
\int \|v_t\|^2\rho dx=\int \|\partial_tT(s,t,x)\|^2\rho_{00}(x)dx. 
\end{equation*}
And 
\begin{equation*}
\begin{split}
\int v_s\cdot v_t\rho dx=& \int  v_s(s,t,y)\cdot v_t(s,t,y)\rho(s,t,y)dy\\
=& \int v_s(s,t,T(s,t,x))\cdot v_t(s,t, T(s,t,x))\rho(s,t,T(s,t,x))dT(s,t,x)\\
=& \int v_s(s,t,T(s,t,x))\cdot v_t(s,t, T(s,t,x))\rho(s,t,T(s,t,x))\mathrm{det}(\nabla_xT(s,t,x))dx\\
=&\int \partial_sT(s,t,x)\cdot \partial_tT(s,t,x)\rho_{00}(x)dx. 
\end{split}
\end{equation*}
The above calculations show thatn \eqref{mws} can be expressed as in \eqref{llmws1}.\end{proof}

We next derive the optimality condition for variational problem \eqref{llmws1}. Denote 
\begin{equation*}
\begin{split}{D}(s,t,T)
=&\Big\{\int \|\partial_s T(s,t,x)\|^2\rho_{00}(x)dx\cdot \int\|\partial_t T(s,t,x)\|^2\rho_{00}(x) dx\\
&-(\int \partial_s T(s,t,x)\cdot \partial_t T(s,t,x) \rho_{00}(x)dx)^2 \Big\}^{\frac{1}{2}}.  
\end{split}
\end{equation*}
\begin{proposition}
The Euler-Lagrange equation of variational problem \eqref{llmws1} satisfies the following second-order PDE: 
\begin{equation*}
\begin{split}
&\partial_{s}\Big(\partial_sT \cdot {D}(s,t,Z)^{-1}\int \|\partial_tT\|^2\rho_{00}dx\Big)+ \partial_{t}\Big(\partial_tT \cdot {D}(s,t,Z)^{-1} \int \|\partial_sT\|^2\rho_{00}dx\Big)\\
-&\partial_t\Big( \partial_{s}T\cdot {D}(s,t,Z)^{-1}\int \partial_s T\cdot\partial_tT\rho_{00}dx \Big)-\partial_s\Big( \partial_{t}T\cdot {D}(s,t,Z)^{-1}\int \partial_s T\cdot\partial_tT\rho_{00}dx \Big) =0,
\end{split}
\end{equation*}
with fixed boundary conditions: 
\begin{equation*}  
T(0,t,\cdot)_\#\rho_{00}=\rho_{0t}, \quad T(1, t, \cdot)_\#\rho_{00}=\rho_{1t}, \quad T(s,0, \cdot)_\#\rho_{00}=\rho_{s0}, \quad T(s, 1, \cdot)_\#\rho_{00}=\rho_{s1}, \quad s,t\in [0,1].
\end{equation*}
\end{proposition}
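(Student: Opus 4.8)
The plan is to compute the first variation of the functional in \eqref{llmws1} directly, exactly as in the proof of the preceding proposition for $C(s,t,Z)$, the only change being the presence of the weight $\rho_{00}(x)$ inside each spatial integral. Write the integrand as $D(s,t,T)$, introduce a smooth test variation $\delta T\colon [0,1]^2\times\mathbb R^n\to\mathbb R^n$ vanishing on the four boundary pieces $\{s=0\}$, $\{s=1\}$, $\{t=0\}$, $\{t=1\}$, and differentiate $\int_0^1\int_0^1 D(s,t,T+\epsilon\,\delta T)\,ds\,dt$ at $\epsilon=0$. Since $D=(PQ-R^2)^{1/2}$ with $P=\int\|\partial_s T\|^2\rho_{00}\,dx$, $Q=\int\|\partial_t T\|^2\rho_{00}\,dx$, $R=\int \partial_s T\cdot\partial_t T\,\rho_{00}\,dx$, the chain rule gives $\tfrac{d}{d\epsilon}D = \tfrac12 D^{-1}\bigl(Q\,\dot P + P\,\dot Q - 2R\,\dot R\bigr)$, and each of $\dot P,\dot Q,\dot R$ is a linear functional of $\partial_s\delta T$ and $\partial_t\delta T$ with the weight $\rho_{00}$. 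Collecting terms, the first variation is
\begin{equation*}
\int_0^1\!\!\int_0^1\!\! D^{-1}\int \Big[\big(Q\,\partial_s T - R\,\partial_t T\big)\cdot\partial_s\delta T + \big(P\,\partial_t T - R\,\partial_s T\big)\cdot\partial_t\delta T\Big]\rho_{00}(x)\,dx\,ds\,dt .
\end{equation*}

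Next I would integrate by parts in the $(s,t)$ variables. Because $\delta T$ vanishes on $\partial([0,1]^2)$, all boundary terms drop, and for an arbitrary smooth compactly supported $\delta T$ one obtains the stated PDE: the coefficient of $\delta T$ must vanish pointwise, which is precisely
\begin{equation*}
\partial_s\!\Big(D^{-1}\partial_s T\!\int\!\|\partial_t T\|^2\rho_{00}\,dx\Big)+\partial_t\!\Big(D^{-1}\partial_t T\!\int\!\|\partial_s T\|^2\rho_{00}\,dx\Big)-\partial_t\!\Big(D^{-1}\partial_s T\!\int\!\partial_s T\!\cdot\!\partial_t T\rho_{00}\,dx\Big)-\partial_s\!\Big(D^{-1}\partial_t T\!\int\!\partial_s T\!\cdot\!\partial_t T\rho_{00}\,dx\Big)=0 .
\end{equation*}
Here one must be slightly careful that $\partial_s\delta T$ and $\partial_t\delta T$ are not independent, so the two integration-by-parts steps must be organized so that the $\int\partial_s T\cdot\partial_t T\,\rho_{00}\,dx$ contribution is split symmetrically between an $\partial_s$ and an $\partial_t$ derivative (this is the origin of the last two terms, each carrying the same coefficient), matching the bookkeeping already done for $C(s,t,Z)$ in the one-dimensional case.

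The only genuine subtlety — and the step I expect to be the main obstacle, or at least the one requiring a remark rather than a computation — is that the admissible variations $\delta T$ are constrained: $T$ is not free but must satisfy the pushforward boundary conditions $T(0,t,\cdot)_\#\rho_{00}=\rho_{0t}$, etc. However, along the interior these pushforward constraints impose nothing pointwise (any smooth $T$ deforming the surface is admissible provided it fixes the four boundary curves in $\mathcal P_2$), so it suffices to take $\delta T$ supported away from $\partial([0,1]^2)$; such variations are unconstrained, and the fundamental lemma of the calculus of variations applies componentwise in $x$ and in each coordinate of $\mathbb R^n$. I would state this reduction explicitly and then note that, just as in \eqref{mws2}, the relations $\partial_s T = v_s\circ T$, $\partial_t T = v_t\circ T$ identify this Euler–Lagrange equation with the Lagrangian pushforward of the critical-point system for Problem~\ref{prob1}. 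The remaining manipulations — expanding $\tfrac{d}{d\epsilon}(PQ-R^2)^{1/2}$ and integrating by parts — are routine and parallel word-for-word the proof given above for $C(s,t,Z)$, so I would simply write ``the proof is identical to that of the previous proposition, with $dx$ replaced by $\rho_{00}(x)\,dx$ throughout'' and exhibit only the first-variation identity and the resulting PDE.
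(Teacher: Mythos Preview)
Your proposal is correct and follows essentially the same approach as the paper's proof: compute the first variation of $\int_0^1\int_0^1 D\,ds\,dt$ via the chain rule on $(PQ-R^2)^{1/2}$, expand, and integrate by parts in $(s,t)$ against test functions $\delta T$ vanishing on $\partial([0,1]^2)$. Your side remark that ``$\partial_s\delta T$ and $\partial_t\delta T$ are not independent'' and that the cross term must be ``split symmetrically'' is unnecessary---the product rule applied to $\dot R=\int(\partial_s\delta T\cdot\partial_tT+\partial_sT\cdot\partial_t\delta T)\rho_{00}\,dx$ already produces one term in $\partial_s\delta T$ and one in $\partial_t\delta T$, and each is integrated by parts in its own variable with no splitting choice to make.
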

\begin{proof}
We derive the Euler-Lagrange equation by computing 
\begin{equation*}
\frac{d}{d\epsilon}\int_0^1\int_0^1{D}(s,t,T+\epsilon \delta T)dsdt|_{\epsilon=0}=0,
\end{equation*}
where $\epsilon \in\mathbb{R}_+$ and $\delta T\in C^{\infty}([0,1]^2, \mathbb{R}^n)$ is a smooth testing function with $\delta T(0, t, x)=\delta T(1, t,x)=\delta T(s, 0,x)=\delta T(s,1,x)=0$ for $s,t\in [0,1]$. 
Note that 
\begin{equation*}
\begin{split}
&\frac{d}{d\epsilon}\int_0^1\int_0^1{D}(s,t, T+\epsilon \delta T)dsdt|_{\epsilon=0}\\
=&\frac{d}{d\epsilon}\int_0^1\int_0^1\Big(\int \|\partial_s T+\epsilon \partial_s\delta T\|^2\rho_{00}dx\cdot \int\|\partial_t T+\epsilon\partial_t\delta T\|^2 \rho_{00}dx\\
&\hspace{3cm}-(\int [\partial_sT+\epsilon\partial_s\delta T]\cdot [\partial_t T+\epsilon\partial_t\delta T] \rho_{00}dx)^2 \Big)^{\frac{1}{2}}ds dt|_{\epsilon=0}\\
=&\frac{1}{2}\int_0^1\int_0^1{D}(s,t,Z)^{-1}\Big(\quad 2\int \partial_s T \cdot\partial_s\delta T\rho_{00} dx\int \|\partial_tT\|^2\rho_{00}dx\\
&\hspace{4cm}+2\int \partial_tT \cdot\partial_t\delta T\rho_{00}dx\int \|\partial_sT\|^2\rho_{00}dx\\
&\hspace{4cm}-2(\int \partial_s T\cdot\partial_tT\rho_{00}dx)\cdot \int (\partial_s\delta T \cdot \partial_tT +\partial_t\delta T\cdot \partial_sT )\rho_{00}dx \Big) dsdt. 
\end{split}
\end{equation*}
We derive the Euler-Lagrange equation by choosing any smooth test function $\delta Z$ and applying integration by parts formulas. 
\end{proof}

\section{Examples in Gaussian distributions}\label{sec4}
In this section, we present special examples of the minimal surface problem \eqref{mws} or \eqref{mwss}. We choose the boundary set of the Wasserstein rectangle to consist of Gaussian distributions with zero means and positive definite covariances. In this case, the minimal surface remains Gaussian, as a two parameter family. We analytically solve several special examples.

We first formulate the finite-dimensional counterpart of \eqref{mws} in terms of covariance matrices. 
\begin{proposition}{Under the above conditions on having a boundary of a Wasserstein rectangle consisting of zero-mean Gaussian distributions, problem \eqref{mws} reduces to the problem of determining
a two-parameter family of covariance matrices $\Sigma\in C^{2}([0,1]^2;\mathbb{R}^{n\times n})$ that solves the following minimization problem:}
\begin{subequations}\label{Gms}
\begin{equation}\label{Gms1}
\inf_{A_s, A_t, \Sigma}\quad \int_0^1\int_0^1\sqrt{\mathrm{tr}(\Sigma A_s^{\ts}A_s)\cdot \mathrm{tr}(\Sigma A_t^{\ts} A_t)-[\frac{1}{2}\mathrm{tr}(\Sigma(A_s^{\ts}A_t+A_t^{\ts}A_s))]^2}ds dt,
\end{equation}
where the minimization is over all $2$-parameter covariances $\Sigma=\Sigma(s,t)$, $(s,t)\in [0,1]^2$, and matrices $A_s=A_s(s,t)$, $A_t=A_t(s,t)\in\mathbb{R}^{n\times n}$, such that continuity equations (expressed in terms of covariance matrices as continuous-time Lyapunov equations),
\begin{equation}\label{Gms2}
\partial_s\Sigma=\Sigma A_s^{\ts}+A_s\Sigma,\quad \partial_t\Sigma=\Sigma A_t^{\ts}+A_t\Sigma, 
\end{equation}
hold with the specified boundary conditions
\begin{equation*}   
\Sigma(0,t), \quad \Sigma(1,t), \quad \Sigma(s,0), \quad \Sigma(s, 1), \mbox{ positive definite  for }s, t\in [0,1].
\end{equation*}
\end{subequations}
\end{proposition}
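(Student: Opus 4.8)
The plan is to substitute a zero-mean Gaussian ansatz into Problem~\ref{prob1} and verify that every ingredient of \eqref{mws} closes on the finite-dimensional manifold of centered Gaussians, so that the variational problem collapses to \eqref{Gms}. Write $\rho(s,t,\cdot)=\mathcal{N}(0,\Sigma(s,t))$ and take the transporting fields linear in $x$, namely $v_s(s,t,x)=A_s(s,t)x$ and $v_t(s,t,x)=A_t(s,t)x$. Using $\rho\propto(\det\Sigma)^{-1/2}\exp(-\tfrac12 x^{\ts}\Sigma^{-1}x)$, hence $\nabla\rho=-\rho\,\Sigma^{-1}x$, together with Jacobi's formula $\partial_s\log\det\Sigma=\mathrm{tr}(\Sigma^{-1}\partial_s\Sigma)$, I would expand $\partial_s\rho+\nabla\cdot(\rho v_s)$ and match the $x$-independent and the quadratic-in-$x$ parts: the quadratic part gives $\Sigma^{-1}(\partial_s\Sigma)\Sigma^{-1}=\Sigma^{-1}A_s+A_s^{\ts}\Sigma^{-1}$, i.e.\ the Lyapunov equation $\partial_s\Sigma=\Sigma A_s^{\ts}+A_s\Sigma$ of \eqref{Gms2}, and the constant part is then automatically consistent; the $t$-equation is identical.

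Next I would compute the three Gaussian second moments entering the area element. For a centered Gaussian with covariance $\Sigma$ one has $\int\|A_sx\|^2\rho\,dx=\int x^{\ts}A_s^{\ts}A_s x\,\rho\,dx=\mathrm{tr}(\Sigma A_s^{\ts}A_s)$, and likewise $\int\|v_t\|^2\rho\,dx=\mathrm{tr}(\Sigma A_t^{\ts}A_t)$; for the cross term $\int v_s\cdot v_t\,\rho\,dx=\mathrm{tr}(\Sigma A_s^{\ts}A_t)$, and since $\mathrm{tr}(\Sigma A_s^{\ts}A_t)=\mathrm{tr}((\Sigma A_s^{\ts}A_t)^{\ts})=\mathrm{tr}(A_t^{\ts}A_s\Sigma)=\mathrm{tr}(\Sigma A_t^{\ts}A_s)$ by symmetry of $\Sigma$ and cyclicity of the trace, this equals $\tfrac12\mathrm{tr}\big(\Sigma(A_s^{\ts}A_t+A_t^{\ts}A_s)\big)$. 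Substituting these three identities into the integrand of \eqref{mws1} yields precisely \eqref{Gms1}. Finally, the Gaussian boundary data $\rho(0,t,\cdot),\rho(1,t,\cdot),\rho(s,0,\cdot),\rho(s,1,\cdot)$ — the prescribed rectangular $W_2$-geodesics, which stay Gaussian and nondegenerate when their endpoints are nondegenerate Gaussians — translate into the positive-definite boundary covariances $\Sigma(0,t),\Sigma(1,t),\Sigma(s,0),\Sigma(s,1)$.

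The point needing care — and the main obstacle — is the claim that this restriction is lossless, i.e.\ that minimizing \eqref{mws} over all admissible $(\rho,v_s,v_t)$ with the given Gaussian boundary is equivalent to minimizing \eqref{Gms1} over $(\Sigma,A_s,A_t)$. I would handle it through the critical-point system of the preceding proposition rather than through a full optimality argument: inserting $\rho$ Gaussian and $v_s,v_t$ linear forces the multipliers $\Phi_s,\Phi_t$ in \eqref{csaddle} to be quadratic forms in $x$, and one checks directly that \eqref{csaddle} then reduces to a closed finite system of matrix ODEs and algebraic relations among $\Sigma$, $A_s$, $A_t$ and the Hessians of $\Phi_s,\Phi_t$ — so that, with Gaussian boundary data, a Gaussian critical point exists and corresponds exactly to a critical point of \eqref{Gms1}. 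The stronger assertion that no non-Gaussian surface achieves a strictly smaller area would require a separate symmetrization/moment-matching argument exploiting that the boundary is already Gaussian; I would flag this but not pursue it here, since the finite-dimensional reduction \eqref{Gms} is what the subsequent explicit Gaussian examples actually use.
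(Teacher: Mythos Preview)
Your proposal is correct and follows essentially the same direct-verification route as the paper: substitute the zero-mean Gaussian ansatz with linear velocity fields, check that the continuity equations collapse to the Lyapunov equations \eqref{Gms2}, and evaluate the three Gaussian second moments to recover the integrand \eqref{Gms1}. The paper's proof does not address the lossless-restriction issue you flag in your final paragraph either—it too is a direct verification that the Gaussian ansatz closes, without any argument ruling out non-Gaussian competitors.
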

\begin{remark}
{The variational problem \eqref{mwss} corresponds to further requiring that
$A_s$ and $A_t$ are symmetric.}
\end{remark}
\begin{proof}
The proof follows a direct verification. Let 
\begin{equation*}
   \rho(s,t,x)=\frac{1}{(2\pi\cdot \mathrm{det}(\Sigma(s,t)))^{\frac{n}{2}}} e^{-\frac{1}{2}x^{\ts}\Sigma(s,t)^{-1}x},  
\end{equation*}
and denote 
\begin{equation*}
   v_s(s,t,x)=A_s(s,t)x,\quad v_t(s,t,x)=A_t(s,t)x.  
\end{equation*}
One can check that variational problem \eqref{Gms} is equivalent to variational problem \eqref{mws}. We first check the constraint, which is the continuity equation:
\begin{equation*}
\begin{split}
 0=&\frac{1}{\rho}(\partial_s\rho+\nabla\cdot(\rho v_s) )\\
=&\partial_s\log\rho+(\nabla\log\rho, v_s)+\nabla\cdot v_s\\
=&-\frac{n}{2}\partial_s \log\mathrm{det}(\Sigma)-\frac{1}{2}x^{\ts}\partial_s\Sigma^{-1}x-(\Sigma^{-1}x, A_sx)+\nabla\cdot(A_sx),
\end{split}
\end{equation*}
where we use the fact that $\frac{\partial_s\rho}{\rho}=\partial_s\log\rho$ and $\frac{\nabla \rho}{\rho}=\nabla\log\rho$. This implies that 
\begin{equation*}
\partial_s\Sigma^{-1}+\Sigma^{-1}A_s+A_s^{\ts}\Sigma^{-1}=0.   \end{equation*}
From the fact that $\partial_s\Sigma^{-1}=-\Sigma^{-1}\cdot\partial_s\Sigma\cdot\Sigma^{-1}$, we have 
\begin{equation*}
 \partial_s\Sigma=A_s\Sigma +\Sigma A_s^{\ts}.    
\end{equation*}
Similarly, we can obtain $\partial_t\Sigma=A_t\Sigma +\Sigma A_t^{\ts}$. We next rewrite the objective functional \eqref{mws1}. Note that 
\begin{equation*}
\begin{split}
    \int \|v_t\|^2\rho dx=&\int (A_tx, A_tx) \frac{1}{(2\pi\cdot \mathrm{det}(\Sigma(s,t)))^{\frac{n}{2}}} e^{-\frac{1}{2}x^{\ts}\Sigma(s,t)^{-1}x} dx\\ 
    =& \mathrm{tr}(\Sigma A_t^{\ts}A_t). 
\end{split}
\end{equation*}
Similarly, 
\begin{equation*}
    \begin{split}
    \int \|v_s\|^2\rho dx=&\int (A_sx, A_sx) \frac{1}{(2\pi\cdot \mathrm{det}(\Sigma(s,t)))^{\frac{n}{2}}} e^{-\frac{1}{2}x^{\ts}\Sigma(s,t)^{-1}x} dx\\ 
    =& \mathrm{tr}(\Sigma A_s^{\ts}A_s),
\end{split}
\end{equation*}
and 
\begin{equation*}
    \begin{split}
    \int v_t\cdot v_s\rho dx=&\int (A_sx, A_tx) \frac{1}{(2\pi\cdot \mathrm{det}(\Sigma(s,t)))^{\frac{n}{2}}} e^{-\frac{1}{2}x^{\ts}\Sigma(s,t)^{-1}x} dx\\ 
    =&\int \frac{1}{2}x^{\ts}(A_s^{\ts}A_t+A_t^{\ts}A_s)x\cdot \frac{1}{(2\pi\cdot \mathrm{det}(\Sigma(s,t)))^{\frac{n}{2}}} e^{-\frac{1}{2}x^{\ts}\Sigma(s,t)^{-1}x} dx\\ 
    =& \frac{1}{2}\mathrm{tr}(\Sigma (A_s^{\ts}A_t+A_t^{\ts}A_s)). 
\end{split}
\end{equation*}
\end{proof}
We next derive the critical-point system for  \eqref{Gms}. Denote 
\begin{equation*}
J=J(\Sigma, A_s,A_t)=\sqrt{\mathrm{tr}(\Sigma A_s^{\ts}A_s)\cdot \mathrm{tr}(\Sigma A_t^{\ts} A_t)-[\frac{1}{2}\mathrm{tr}(\Sigma(A_s^{\ts}A_t+A_t^{\ts}A_s))]^2}.   
\end{equation*}
\begin{proposition}
Denote $S_s$, $S_t\colon [0, 1]^2\rightarrow\mathbb{R}^{n\times n}$ as Lagrangian multipliers of equation \eqref{Gms2} for $\partial_s\Sigma$, $\partial_t\Sigma$, respectively. Then $S_s$, $S_t$ are symmetric matrices, i.e., $S_s=S_s^{\ts}$, $S_t=S_t^{\ts}$, and the  critical-point system of variational problem \eqref{Gms} consists of 
\begin{subequations}\label{mW}
\begin{equation}\label{mW1}
\left\{\begin{aligned}
&J^{-1}\Big(A_s \cdot \mathrm{tr}(\Sigma A_t^{\ts}A_t)-\frac{1}{2}A_t \cdot  \mathrm{tr}(\Sigma (A_s^{\ts}A_t+A_t^{\ts}A_s))\Big)=2S_s,\\ 
&J^{-1}\Big(A_t \cdot \mathrm{tr}(\Sigma A_s^{\ts}A_s)-\frac{1}{2}A_s \cdot  \mathrm{tr}(\Sigma (A_s^{\ts}A_t+A_t^{\ts}A_s))\Big)=2S_t, 
\end{aligned}\right.
\end{equation}
and 
\begin{equation}\label{mW2}
\left\{\begin{aligned}
&\partial_s S_s+\partial_t S_t
+\frac{1}{2}J^{-1}\Big\{A_s^{\ts}A_s\cdot \mathrm{tr}(\Sigma A_t^{\ts}A_t)+A_t^{\ts}A_t\cdot\mathrm{tr}(\Sigma A_s^{\ts}A_s)\\
&\hspace{3.5cm}-\frac{1}{2}(A_s^{\ts}A_t+A_t^{\ts}A_s)\cdot\mathrm{tr}(\Sigma (A_s^{\ts}A_t+A_t^{\ts}A_s))\Big\}=0,\\
&\partial_s\Sigma=\Sigma A_s^{\ts}+A_s\Sigma,\\
&\partial_t\Sigma=\Sigma A_t^{\ts}+A_t\Sigma. 
\end{aligned}\right. 
\end{equation}
\end{subequations}
\end{proposition}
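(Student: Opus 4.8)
The plan is to imitate the derivation of the density-space optimality system in Section~\ref{minimal}: pass to ``momentum'' variables that turn the constraints into affine relations, form the corresponding Lagrangian with matrix multipliers, and extract the Euler--Lagrange equations by matrix calculus. Since each $\Sigma(s,t)$ is symmetric positive definite, I would set $N_s:=A_s\Sigma$ and $N_t:=A_t\Sigma$, so that $A_s=N_s\Sigma^{-1}$, $A_t=N_t\Sigma^{-1}$, and (using $\Sigma=\Sigma^{\ts}$) the Lyapunov constraints \eqref{Gms2} become the affine relations $\partial_s\Sigma=N_s+N_s^{\ts}$, $\partial_t\Sigma=N_t+N_t^{\ts}$. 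In these variables the three scalars already computed in the previous proposition read $\mathrm{tr}(\Sigma A_s^{\ts}A_s)=\mathrm{tr}(\Sigma^{-1}N_s^{\ts}N_s)$, $\mathrm{tr}(\Sigma A_t^{\ts}A_t)=\mathrm{tr}(\Sigma^{-1}N_t^{\ts}N_t)$, and $\mathrm{tr}(\Sigma(A_s^{\ts}A_t+A_t^{\ts}A_s))=\mathrm{tr}(\Sigma^{-1}(N_s^{\ts}N_t+N_t^{\ts}N_s))$, so $J$ becomes an explicit function $J(N_s,N_t,\Sigma)$ depending on $\Sigma$ only through $\Sigma^{-1}$.

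Next I would introduce matrix multipliers $S_s,S_t$ for the two affine constraints and study the saddle point of
\[
\mathcal{L}=\int_0^1\!\!\int_0^1\! J\,ds\,dt+\int_0^1\!\!\int_0^1\!\Big[\mathrm{tr}\big(S_s(\partial_s\Sigma-N_s-N_s^{\ts})\big)+\mathrm{tr}\big(S_t(\partial_t\Sigma-N_t-N_t^{\ts})\big)\Big]ds\,dt .
\]
The symmetry of the multipliers is then immediate: the constraint residuals $\partial_s\Sigma-N_s-N_s^{\ts}$ and $\partial_t\Sigma-N_t-N_t^{\ts}$ are symmetric matrices (as $\Sigma$, and hence $\partial_s\Sigma$, $\partial_t\Sigma$, are symmetric), so any skew part of $S_s$ or $S_t$ pairs to zero against them, and we may take $S_s=S_s^{\ts}$, $S_t=S_t^{\ts}$ without loss of generality. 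The conditions $\delta\mathcal{L}/\delta S_s=\delta\mathcal{L}/\delta S_t=0$ recover the Lyapunov equations in \eqref{mW2}.

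Then I would impose the $N_s$-, $N_t$- and $\Sigma$-variations. For the $N_s$-variation I use $\partial_{N_s}\mathrm{tr}(\Sigma^{-1}N_s^{\ts}N_s)=2N_s\Sigma^{-1}=2A_s$ and $\partial_{N_s}\mathrm{tr}(\Sigma^{-1}(N_s^{\ts}N_t+N_t^{\ts}N_s))=2N_t\Sigma^{-1}=2A_t$, which give $\partial_{N_s}J=J^{-1}\big(A_s\,\mathrm{tr}(\Sigma A_t^{\ts}A_t)-\frac{1}{2}A_t\,\mathrm{tr}(\Sigma(A_s^{\ts}A_t+A_t^{\ts}A_s))\big)$; since the multiplier term contributes $-(S_s+S_s^{\ts})=-2S_s$ to the $N_s$-derivative, setting the total to zero yields the first line of \eqref{mW1}, and the $s\leftrightarrow t$ version yields the second. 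For the $\Sigma$-variation the key point is that the terms $\mathrm{tr}(S_s(N_s+N_s^{\ts}))$ are now $\Sigma$-free, so only $\mathrm{tr}(S_s\partial_s\Sigma)$ and $\mathrm{tr}(S_t\partial_t\Sigma)$ contribute there; integrating these by parts in $s$ and $t$ respectively, with boundary terms vanishing because $\Sigma$ is prescribed on $\partial([0,1]^2)$, produces $-\partial_sS_s-\partial_tS_t$. Combining this with $\partial_\Sigma J$, evaluated via $\partial_\Sigma\mathrm{tr}(\Sigma^{-1}M)=-\Sigma^{-1}M\Sigma^{-1}$ for symmetric $M$ together with $\Sigma^{-1}N_s^{\ts}N_s\Sigma^{-1}=A_s^{\ts}A_s$ and $\Sigma^{-1}(N_s^{\ts}N_t+N_t^{\ts}N_s)\Sigma^{-1}=A_s^{\ts}A_t+A_t^{\ts}A_s$, gives precisely the first line of \eqref{mW2}; re-substituting $A_s=N_s\Sigma^{-1}$, $A_t=N_t\Sigma^{-1}$ throughout produces the full system \eqref{mW}.

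I expect the only real obstacle to be getting the formulation right rather than the computation: if one varies $A_s,A_t,\Sigma$ directly in the bilinear constraint $\partial_s\Sigma=\Sigma A_s^{\ts}+A_s\Sigma$, the $\Sigma$-variation acquires extra terms $A_s^{\ts}S_s+S_sA_s$ (and its $t$-analog) that are absent from \eqref{mW2} and do not cancel against \eqref{mW1}; it is exactly the substitution $N_s=A_s\Sigma$---the covariance-parameter analog of $m_s=\rho v_s$ used for the density-space proposition, since $\int(\rho v_s)x^{\ts}dx=A_s\Sigma$---that linearizes the constraint and removes them. The remaining work is routine bookkeeping of transposes, plus the (harmless) observation that because $\Sigma$ and $S_s,S_t$ range over symmetric matrices every $L^2$-gradient should be symmetrized, which changes nothing here since all matrices appearing ($A_s^{\ts}A_s$, $A_s^{\ts}A_t+A_t^{\ts}A_s$, $\partial_sS_s$, $\partial_tS_t$) are already symmetric. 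For the Remark one finally notes that $v_s=\nabla\Psi_s$ with a quadratic, zero-mean potential forces $A_s$ to equal the Hessian of $\Psi_s$, hence symmetric, which is the reduction corresponding to Problem~\ref{prob2}.
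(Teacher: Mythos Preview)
Your argument is correct but follows a different route from the paper. The paper keeps the original variables $(\Sigma,A_s,A_t)$, forms
\[
\mathcal{L}_2=\int_0^1\!\!\int_0^1\Big[J+\mathrm{tr}\big(S_s(\partial_s\Sigma-\Sigma A_s^{\ts}-A_s\Sigma)\big)+\mathrm{tr}\big(S_t(\partial_t\Sigma-\Sigma A_t^{\ts}-A_t\Sigma)\big)\Big]ds\,dt,
\]
and takes the saddle-point conditions directly. The $A_s$- and $A_t$-variations give \eqref{mW1} after cancelling a common factor $\Sigma$ on the right; the $\Sigma$-variation, as you correctly anticipate, produces additional terms $-2A_s^{\ts}S_s-2A_t^{\ts}S_t$ (with the direct objective contributing $+\tfrac12 J^{-1}[\cdots]$, the opposite sign from your momentum calculation). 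The paper then \emph{substitutes \eqref{mW1} back in}: since $2A_s^{\ts}S_s+2A_t^{\ts}S_t=J^{-1}[A_s^{\ts}A_s\,\mathrm{tr}(\Sigma A_t^{\ts}A_t)+A_t^{\ts}A_t\,\mathrm{tr}(\Sigma A_s^{\ts}A_s)-\tfrac12(A_s^{\ts}A_t+A_t^{\ts}A_s)\,\mathrm{tr}(\cdots)]$, the combination $\tfrac12 J^{-1}[\cdots]-2A_s^{\ts}S_s-2A_t^{\ts}S_t$ collapses to $-\tfrac12 J^{-1}[\cdots]$ and yields exactly the first line of \eqref{mW2}. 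So your statement that these extra terms ``do not cancel against \eqref{mW1}'' is the one inaccuracy in the proposal: they do, and that is precisely how the paper closes the argument.

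Your momentum substitution $N_s=A_s\Sigma$, $N_t=A_t\Sigma$ is the covariance-level analogue of the $m=\rho v$ trick used in the density-space proposition, and it is a perfectly clean alternative: it linearizes the constraints, puts all $\Sigma$-dependence of $J$ into $\Sigma^{-1}$, and delivers \eqref{mW2} without any back-substitution. What you gain is a proof that mirrors the density-space derivation step for step and avoids the bilinear cross terms altogether; what the paper's direct approach gains is that it never leaves the $(\Sigma,A_s,A_t)$ variables in which the problem and its solution are stated. Both are short and valid.
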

\begin{proof}
Note that $\partial_s\Sigma$, $\partial_t\Sigma$ in equation \eqref{Gms2} are symmetric matricial equations. The corresponding Lagrangian multipliers can be chosen as symmetric matrices $S_s$, $S_t\in\mathbb{R}^{n\times n}$. I.e., $S_s=S_s^{\ts}$, $S_t=S_t^{\ts}$. We formulate the saddle point problem of variational problem \eqref{Gms},
\begin{equation*}
\inf_{\Sigma, A_s, A_t}\sup_{S_s, S_t}\quad \mathcal{L}_2(\Sigma, A_s,A_t,S_s, S_t)
\end{equation*}
where we denote $\mathcal{L}_2=\mathcal{L}_2(\Sigma, A_s,A_t,S_s, S_t)$ with 
\begin{equation*}
  \mathcal{L}_2=\int_0^1\int_0^1 J+ \mathrm{tr}\Big(S_s (\partial_s\Sigma-\Sigma A_s^{\ts}-A_s\Sigma)\Big) + \mathrm{tr}\Big(S_t (\partial_t\Sigma-\Sigma A_t^{\ts}-A_t\Sigma)\Big)  ds dt.   
\end{equation*}
By computing the saddle point of $\mathcal{L}_2$, we obtain 
\begin{equation}\label{saddle}
\left\{\begin{aligned}
&\frac{\delta}{\delta \Sigma}\mathcal{L}_2=0,\\
&\frac{\delta}{\delta A_s}\mathcal{L}_2=0,\\
&\frac{\delta}{\delta A_t}\mathcal{L}_2=0,\\
&\frac{\delta}{\delta S_s}\mathcal{L}_2=0,\\
&\frac{\delta}{\delta S_t}\mathcal{L}_2=0,
\end{aligned}\right. \Rightarrow   
\left\{\begin{aligned}
&\frac{1}{2}J^{-1}\Big(A_s^{\ts}A_s\mathrm{tr}(\Sigma A_t^{\ts}A_t)+A_t^{\ts}A_t\mathrm{tr}(\Sigma A_s^{\ts}A_s)\\
&\hspace{1cm}-\frac{1}{2}(A_s^{\ts}A_t+A_t^{\ts}A_s)\mathrm{tr}(\Sigma (A_s^{\ts}A_t+A_t^{\ts}A_s))\Big)\\
&-\partial_s S_s-\partial_t S_t-2A_s^{\ts}S_s-2A_t^{\ts}S_t=0,\\
&\frac{1}{2}J^{-1}\Big(2A_s\Sigma \cdot \mathrm{tr}(\Sigma A_t^{\ts}A_t)-A_t\Sigma\cdot  \mathrm{tr}(\Sigma (A_s^{\ts}A_t+A_t^{\ts}A_s))\Big)-2S_s\Sigma=0,\\
&\frac{1}{2}J^{-1}\Big(2A_t\Sigma \cdot \mathrm{tr}(\Sigma A_s^{\ts}A_s)-A_s\Sigma\cdot  \mathrm{tr}(\Sigma (A_s^{\ts}A_t+A_t^{\ts}A_s))\Big)-2S_t\Sigma=0,\\
&\partial_s\Sigma-\Sigma A_s^{\ts}-A_s\Sigma=0,\\
&\partial_t\Sigma-\Sigma A_t^{\ts}-A_t\Sigma=0. 
\end{aligned}\right. 
\end{equation}
For the third equality, we have 
\begin{equation*}
J^{-1}\Big(A_s \cdot \mathrm{tr}(\Sigma A_t^{\ts}A_t)-\frac{1}{2}A_t \cdot  \mathrm{tr}(\Sigma (A_s^{\ts}A_t+A_t^{\ts}A_s))\Big)-2S_s=0. 
\end{equation*}
Similarly, for the fourth equality, we have 
\begin{equation*}
J^{-1}\Big(A_t \cdot \mathrm{tr}(\Sigma A_s^{\ts}A_s)-\frac{1}{2}A_s \cdot  \mathrm{tr}(\Sigma (A_s^{\ts}A_t+A_t^{\ts}A_s))\Big)-2S_t=0. 
\end{equation*}
We finish the proof by applying the above equations to the saddle point system \eqref{saddle}. 
\end{proof}
We claim that the  critical-point system \eqref{mW} also solves the PDE system \eqref{csaddle} for the minimal surface problem \eqref{mws}. 
\begin{proposition}
 Suppose $(\Sigma(s,t), A_s(s,t), A_t(s,t), S_s(s,t), S_t(s,t))$ solves system \eqref{mW}. Denote 
\begin{equation*}
 \rho(s,t,x)=\frac{1}{(2\pi\cdot \mathrm{det}(\Sigma(s,t)))^{\frac{n}{2}}} e^{-\frac{1}{2}x^{\ts}\Sigma(s,t)^{-1}x}, \quad v_s(s,t,x)=A_s(s,t) x, \quad v_t(s,t,x)=A_t(s,t)x,    
\end{equation*}
and 
\begin{equation*}
 \Phi_s(s,t,x)=x^{\ts}S_s(s,t)x, \quad \Phi_t(s,t,x)=x^{\ts}S_t(s,t)x.    
\end{equation*}
Then $(\rho, v_s, v_t, \Phi_s, \Phi_t)$ solves the PDE system \eqref{csaddle}. 
\end{proposition}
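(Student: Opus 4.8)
The plan is a direct substitution: plug the Gaussian ansatz into each of the five equations that make up \eqref{csaddle} and verify, term by term, that each reduces to one of the equations of the matrix system \eqref{mW}, which is assumed to hold.

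First I would collect the ingredients already available. From the proof of the reduction of \eqref{mws} to \eqref{Gms}, with $v_s=A_sx$, $v_t=A_tx$ and $\rho$ the centered Gaussian of covariance $\Sigma(s,t)$, one has the moment identities $\int\|v_s\|^2\rho\,dx=\mathrm{tr}(\Sigma A_s^{\ts}A_s)$, $\int\|v_t\|^2\rho\,dx=\mathrm{tr}(\Sigma A_t^{\ts}A_t)$, and $\int v_s\cdot v_t\,\rho\,dx=\tfrac12\mathrm{tr}(\Sigma(A_s^{\ts}A_t+A_t^{\ts}A_s))$; in particular the area element $A$ appearing in \eqref{csaddle} equals the scalar $J$ defined before \eqref{mW}. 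Moreover $\|v_s\|^2=x^{\ts}A_s^{\ts}A_sx$, $\|v_t\|^2=x^{\ts}A_t^{\ts}A_tx$, and (after symmetrizing the quadratic form) $v_s\cdot v_t=x^{\ts}A_t^{\ts}A_sx=\tfrac12 x^{\ts}(A_s^{\ts}A_t+A_t^{\ts}A_s)x$. Finally, by the previous proposition $S_s=S_s^{\ts}$ and $S_t=S_t^{\ts}$, so that $\nabla\Phi_s=\nabla(x^{\ts}S_sx)=2S_sx$ and $\nabla\Phi_t=2S_tx$.

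Now I would check the equations of \eqref{csaddle} one by one. For the two gradient equations: substituting the ansatz, the left-hand side of the first becomes $J^{-1}\big(A_s\,\mathrm{tr}(\Sigma A_t^{\ts}A_t)-\tfrac12 A_t\,\mathrm{tr}(\Sigma(A_s^{\ts}A_t+A_t^{\ts}A_s))\big)x$ and the right-hand side is $2S_sx$; since both are linear in $x$, the identity holds for all $x$ exactly because of the first line of \eqref{mW1} (and symmetrically for the second gradient equation, using the second line of \eqref{mW1}; as a by-product the left matrix is forced to be symmetric, consistently with its being a gradient). For the scalar equation in the second block of \eqref{csaddle} every term is a quadratic form in $x$: $\partial_s\Phi_s+\partial_t\Phi_t=x^{\ts}(\partial_sS_s+\partial_tS_t)x$, and the bracketed term equals $x^{\ts}Mx$, where $M$ is precisely the symmetric matrix $\tfrac12 J^{-1}\{A_s^{\ts}A_s\,\mathrm{tr}(\Sigma A_t^{\ts}A_t)+A_t^{\ts}A_t\,\mathrm{tr}(\Sigma A_s^{\ts}A_s)-\tfrac12(A_s^{\ts}A_t+A_t^{\ts}A_s)\mathrm{tr}(\Sigma(A_s^{\ts}A_t+A_t^{\ts}A_s))\}$ occurring in \eqref{mW2} — the factor $\tfrac14=\tfrac12\cdot\tfrac12$ arising from symmetrizing $v_s\cdot v_t$. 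Since $\partial_sS_s+\partial_tS_t+M$ is symmetric, the condition $x^{\ts}(\partial_sS_s+\partial_tS_t+M)x\equiv 0$ is equivalent to $\partial_sS_s+\partial_tS_t+M=0$, which is the first line of \eqref{mW2}. The two continuity equations $\partial_s\rho+\nabla\cdot(\rho v_s)=0$ and $\partial_t\rho+\nabla\cdot(\rho v_t)=0$ reduce, by the computation already carried out in the proof of the reduction to \eqref{Gms}, to the Lyapunov equations $\partial_s\Sigma=\Sigma A_s^{\ts}+A_s\Sigma$ and $\partial_t\Sigma=\Sigma A_t^{\ts}+A_t\Sigma$, which are the remaining lines of \eqref{mW2}. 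Assembling these, $(\rho,v_s,v_t,\Phi_s,\Phi_t)$ solves \eqref{csaddle}.

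The argument is routine; the only care needed is in translating between quadratic forms in $x$ and their symmetric matrix representatives — in particular the $\tfrac12$ from symmetrizing $x^{\ts}A_t^{\ts}A_sx$ — and in using $S_s=S_s^{\ts}$, $S_t=S_t^{\ts}$ to identify $\nabla\Phi_s$, $\nabla\Phi_t$. These are exactly the bookkeeping points already met in passing from the saddle-point system \eqref{saddle} to \eqref{mW1}, so I anticipate no genuine obstacle.
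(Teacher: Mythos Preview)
Your proposal is correct and is precisely the ``standard calculations'' the paper alludes to before omitting the proof; there is no alternative approach to compare, since the paper gives none. Your bookkeeping of the symmetrization of $v_s\cdot v_t$ and of $\nabla\Phi_s=2S_sx$ via $S_s=S_s^{\ts}$ is exactly what is needed, and the term-by-term matching with \eqref{mW1}--\eqref{mW2} goes through as you describe.
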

The proof follows standard calculations and is omitted.

\subsection{Explicit examples}
We next derive explicit solutions in special cases for the variational problem \eqref{Gms}. Specifically, from now on, we assume that the rectangular boundaries are Gaussian distributions with diagonal covariance matrices. 
\begin{proposition}
Suppose $\Sigma(0,t)$, $\Sigma(1,t)$, $\Sigma(s,0)$, $\Sigma(s, 1)$ are diagonal positive definite matrices, for $s, t\in [0,1]$. 
Then variational problem \eqref{Gms} can be recast as
\begin{equation}\label{mwss1}
\inf_{\Sigma}\quad \int_0^1\int_0^1\sqrt{\mathrm{tr}((\partial_t\sqrt{\Sigma})^2)\cdot\mathrm{tr}((\partial_s\sqrt{\Sigma})^2)-[\mathrm{tr}(\partial_s\sqrt{\Sigma}\cdot\partial_t\sqrt{\Sigma})]^2} ds dt,
\end{equation}
where the minimization is over diagonal covariances $\Sigma=\Sigma(s,t)=\textrm{diag}(\Sigma_{ii}(s,t))_{1\leq i\leq n}$, $(s,t)\in [0,1]^2$ with fixed boundaries of positive definite covarianceas.    
\end{proposition}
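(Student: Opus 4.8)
The plan is to establish the reduction by inserting the diagonal ansatz directly into \eqref{Gms} and carrying out the resulting trace algebra. Since the four prescribed boundary curves are Wasserstein-$2$ geodesics between zero-mean Gaussians with diagonal covariances, they are themselves diagonal: the geodesic between $N(0,\Sigma_0)$ and $N(0,\Sigma_1)$ with $\Sigma_0,\Sigma_1$ diagonal is $\big((1-\lambda)I+\lambda T\big)\Sigma_0\big((1-\lambda)I+\lambda T\big)$ with $T$ diagonal. Hence the diagonal class $\Sigma(s,t)=\mathrm{diag}(\Sigma_{ii}(s,t))$ is compatible with the boundary data (and with the critical-point system \eqref{mW}), and one restricts the minimization in \eqref{Gms} to this class.

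First I would solve the Lyapunov constraints \eqref{Gms2} in closed form on the diagonal class. Because diagonal matrices commute, $A_s=\tfrac12\Sigma^{-1}\partial_s\Sigma$ and $A_t=\tfrac12\Sigma^{-1}\partial_t\Sigma$ are diagonal, symmetric, and satisfy $\Sigma A_s^{\ts}+A_s\Sigma=\partial_s\Sigma$ and $\Sigma A_t^{\ts}+A_t\Sigma=\partial_t\Sigma$. One should also check that this is the optimal admissible choice: a general admissible $A_s$ differs from $\tfrac12\Sigma^{-1}\partial_s\Sigma$ by a term $\Omega_s\Sigma^{-1}$ with $\Omega_s$ antisymmetric, the cross terms $\mathrm{tr}(\Sigma B^{\ts}\Omega_s\Sigma^{-1})=\mathrm{tr}(B\Omega_s)$ vanish since $B$ is diagonal, and a short computation with the $2\times 2$ Gram determinant inside $J$ shows that switching on the antisymmetric parts of $A_s,A_t$ cannot decrease $J$. (Since these symmetric $A_s,A_t$ are also gradient fields, this is why Problems \ref{prob1} and \ref{prob2} coincide on the diagonal class, matching the Remark.)

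Next comes the substitution. Set $R=\sqrt{\Sigma}=\mathrm{diag}(\sqrt{\Sigma_{ii}})$, so that $\partial_s\Sigma=R\,\partial_sR+\partial_sR\,R=2R\,\partial_sR$ and therefore $A_s=\tfrac12\Sigma^{-1}(2R\,\partial_sR)=R^{-1}\partial_sR$, and likewise $A_t=R^{-1}\partial_tR$. Using that every matrix in sight is diagonal, so traces are invariant under reordering of factors,
\[
\mathrm{tr}(\Sigma A_s^{\ts}A_s)=\mathrm{tr}\!\big(R^2(R^{-1}\partial_sR)^2\big)=\mathrm{tr}\!\big((\partial_sR)^2\big),\qquad \mathrm{tr}(\Sigma A_t^{\ts}A_t)=\mathrm{tr}\!\big((\partial_tR)^2\big),
\]
and, since $A_s^{\ts}A_t=A_t^{\ts}A_s=A_sA_t$ on the diagonal class, $\tfrac12\mathrm{tr}\big(\Sigma(A_s^{\ts}A_t+A_t^{\ts}A_s)\big)=\mathrm{tr}\!\big(R^2(R^{-1}\partial_sR)(R^{-1}\partial_tR)\big)=\mathrm{tr}(\partial_sR\,\partial_tR)$. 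Substituting into the integrand $J$ of \eqref{Gms1} yields exactly $\sqrt{\mathrm{tr}((\partial_tR)^2)\,\mathrm{tr}((\partial_sR)^2)-[\mathrm{tr}(\partial_sR\,\partial_tR)]^2}$ with $R=\sqrt\Sigma$, which is the asserted integrand, and the boundary conditions transcribe verbatim. As a sanity check, componentwise $\mathrm{tr}((\partial_sR)^2)=\sum_i(\partial_s\sqrt{\Sigma_{ii}})^2$, which is precisely the one-dimensional inverse-CDF reduction applied coordinate by coordinate to the product Gaussian.

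The main obstacle is not any of these computations but the legitimacy of restricting to diagonal $\Sigma$ in the first place, i.e., showing the infimum in \eqref{Gms} over all two-parameter covariance fields with diagonal boundary data equals the infimum over diagonal fields. The easy half is the ``$\le$'' direction together with the optimality of diagonal $A_s,A_t$ for a given diagonal $\Sigma$ (above). For the reverse inequality I would invoke the symmetry of \eqref{Gms} under the group of diagonal sign matrices $g\in\{\pm1\}^{n}$ acting by $\Sigma\mapsto g\Sigma g^{\ts}$, $A_\bullet\mapsto gA_\bullet g^{\ts}$: this preserves the constraints and the cost and fixes the diagonal boundary data, so any minimizer generates a full orbit of minimizers. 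The delicate point is to pass from this to a $G$-invariant (hence diagonal) minimizer; note a naive Jensen/averaging argument is \emph{not} available because $J$ is concave, not convex, in its arguments, so I would instead argue through the structure of the critical-point system \eqref{mW}, verifying that the diagonal ansatz is closed under it and matches the boundary data, so that it produces genuine minimal surfaces. In the spirit of the rest of the section, one may alternatively simply adopt the diagonal ansatz as the definition of the restricted problem and record the displayed functional as its equivalent form.
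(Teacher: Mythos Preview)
Your approach is correct and essentially matches the paper's: insert the diagonal ansatz, read off $A_{s,ii}=\tfrac12\Sigma_{ii}^{-1}\partial_s\Sigma_{ii}$ from the Lyapunov constraints, compute the three traces componentwise to obtain $\sum_i(\partial_s\sqrt{\Sigma_{ii}})^2$, $\sum_i(\partial_t\sqrt{\Sigma_{ii}})^2$, and $\sum_i\partial_s\sqrt{\Sigma_{ii}}\,\partial_t\sqrt{\Sigma_{ii}}$, and then note that the diagonal ansatz is consistent with the critical-point system \eqref{mW}. The paper does not treat the optimality of diagonal $A_s,A_t$ among all admissible choices nor the symmetry/averaging issue you raise in your last paragraph; it simply adopts the diagonal restriction and checks consistency with \eqref{mW}, which is exactly your stated fallback.
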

\begin{proof}
Assume that $\Sigma(s,t)$ is a diagonal matrix for $s,t\in [0,1]$. From \eqref{Gms2} we have 
\begin{equation*}
\partial_s\Sigma_{ii}=2\Sigma_{ii}A_{s, ii}, 
\end{equation*}
and thus,
\begin{equation} \partial_s\sqrt{\Sigma_{ii}}=\frac{1}{2}(\Sigma_{ii})^{-\frac{1}{2}}\partial_s\Sigma_{ii}=(\Sigma_{ii})^{\frac{1}{2}}A_{s,ii}.   
\end{equation}
Therefore,
\begin{equation*}
\begin{split}
  \mathrm{tr}(\Sigma A_s^{\ts}A_s)=&\sum_{i=1}^n A_{s,ii}^2\Sigma_{ii}=\sum_{i=1}^n(\partial_s\sqrt{\Sigma_{ii}})^2=\|\partial_s\gamma\|^2,\\
  \mathrm{tr}(\Sigma A_t^{\ts}A_t)=&\sum_{i=1}^n(\partial_t\sqrt{\Sigma_{ii}})^2=\|\partial_t\gamma\|^2,  
  \end{split}
\end{equation*}
and 
\begin{equation*}
    \frac{1}{2}\mathrm{tr}(\Sigma (A_t^{\ts}A_s+A_s^{\ts}A_t))=\sum_{i=1}^n A_{s, ii}A_{t,ii}\Sigma_{ii}=\sum_{i=1}^n\partial_s\sqrt{\Sigma_{ii}}\cdot \partial_t\sqrt{\Sigma_{ii}}=\partial_s\gamma\cdot \partial_t\gamma. 
\end{equation*}
{Then, \eqref{Gms} reduces to \eqref{mwss1}, when $\Sigma(s,t)$ are diagonal matrices. Moreover, one can check that if $\Sigma$, $A_s$, $A_t$, $S_s$, $S_t$ are diagonal, it also solves the matrix system \eqref{mW}, which also satisfies the critical system of \eqref{mwss1}. This finishes the proof.}  
\end{proof}

Lastly, we present several closed-form solutions of minimal Wassertein surface problems in terms of Gaussian distributions with diagonal covariance matrices. For the simplicity of discussion we let $n=3$. Also, let $\gamma=\gamma(s,t)=(\gamma_i(s,t))_{i=1}^n\in\mathbb{R}^3$ with $\gamma_i(s,t)=\sqrt{\Sigma_{ii}(s,t)}$, $i=1,2,3$, and rewrite \eqref{mwss1} in terms of $\gamma$ as
\begin{equation*}
\inf_{\gamma}\quad \int_0^1\int_0^1\sqrt{\|\partial_t\gamma(s,t)\|^2\cdot\|\partial_s\gamma(s,t)\|^2-(\partial_t\gamma(s,t)\cdot \partial_s\gamma(s,t))^2} ds dt, 
\end{equation*}
where the minimization is among all paths  $\gamma\colon[0,1]^2\rightarrow\mathbb{R}^3$. This forms exactly a finite-dimensional minimal surface problem \eqref{E_min_surface}. Once again, let
\[
\gamma(s,t)=(s,t, z(s,t))^{\ts}=(\Sigma_{11}, \Sigma_{22}, \Sigma_{33}(\Sigma_{11}, \Sigma_{22}))^{\ts},
\]
where $\Sigma_{33}\colon \mathbb{R}^2_+\rightarrow\mathbb{R}_+$ is a two-variable function that depends on $\Sigma_{11}$ and $\Sigma_{22}$. The minimizer of \eqref{mwss1} satisfies the minimal surface equation \eqref{ms}. In other words, 
\begin{equation*}
(1+|\partial_tz|^2)\partial^2_{ss}z-2\partial_tz\partial_sz\partial^2_{st}z+ (1+|\partial_sz|^2)\partial^2_{tt}z=0.    
\end{equation*}
Several explicit solutions can be presented from classical studies of minimal surface problems. We present most derivations here for completeness (cf.~\cite{AT}). 
\begin{example}[Wasserstein plane on Gaussian distributions]
Let 
\begin{equation*}
   z(s,t)=a_1s+a_2 t+a_3, 
\end{equation*}
where $a_1$, $a_2$, $a_3\in \mathbb{R}$ are constants determined by the boundary conditions of the covariance matrices. Clearly,  $\partial_{ss}^2z=\partial^2_{st}z=\partial^2_{tt}z=0$, which satisfies equation \eqref{ms}. In other words, the plane in the Gaussian distribution satisfies 
\begin{equation*}
\sqrt{\Sigma_{33}}=a_1\sqrt{\Sigma_{11}}+a_2\sqrt{\Sigma_{22}}+a_3. 
\end{equation*}
\end{example}

\begin{example}[Wasserstein Scherk's surface on Gaussian distributions]
Let 
\begin{equation*}
    z(s,t)=g(s)+h(t),
\end{equation*}
where $g$, $h\colon \mathbb{R}\rightarrow\mathbb{R}$ are smooth functions. In this case, equation \eqref{ms} satisfies 
\begin{equation*}
    (1+\dot h^2(t))\ddot g(s)+(1+\dot g^2(s))\ddot h(t)=0.
\end{equation*}
This implies 
\begin{equation*}
    \frac{\ddot g(s)}{1+\dot g^2(s)}=-\frac{\ddot h(t)}{1+\dot h^2(t)}=-c, 
\end{equation*}
where $c\in\mathbb{R}$. If $c=0$, we obtain a trivial plane solution in which $z$ is linear in terms of $s$ and $t$. If $c\neq 0$, solving 
\begin{equation*}
 \frac{\ddot g(s)}{1+\dot g^2(s)} =\frac{d}{ds}\mathrm{arc}\tan (\dot g(s))=-c,
\end{equation*}
we obtain 
\begin{equation*}
   \dot g(s)=-\tan(cs-k_1), 
\end{equation*}
where $k_1\in\mathbb{R}$ is a constant. Thus $g(s)=\frac{1}{c}\log\cos(cs-k_1)+a_1$. Similarly, we can solve for $h(t)$, such that 
\begin{equation*}
    h(t)=-\frac{1}{c}\log\cos(ct-k_2)+a_2,
\end{equation*}
where $k_2$, $a_2\in\mathbb{R}$ are constants. Hence 
\begin{equation*}
z(s,t)=\frac{1}{c}\log\frac{\cos(cs-k_1)}{\cos(ct-k_2)}+(a_1+a_2). 
\end{equation*}
In other words, 
\begin{equation*}
\sqrt{\Sigma_{33}}=\frac{1}{c}\log\frac{\cos(c\sqrt{\Sigma_{11}}-k_1)}{\cos(c\sqrt{\Sigma_{22}}-k_2)}+(a_1+a_2).   
\end{equation*}
This solution can be viewed as a Scherk’s surface in terms of Gaussian covariance matrices. 
\end{example}
\begin{example}[Wasserstein Catenoid on Gaussian distributions]
Let 
\begin{equation*}
z(s,t)=f(r), \quad\textrm{where}\quad r=s^2+t^2. 
\end{equation*}
By some computations \cite{AT}, equation \eqref{ms} satisfies 
\begin{equation*}
 r\ddot f(r)+\dot f(r)^3+\dot f(r)=0.    
\end{equation*}
Thus 
\begin{equation*}
  \frac{\ddot f(r)}{\dot f(r)}-\frac{\dot f(r)\ddot f(r)}{1+\dot f(r)^2}=\frac{\ddot f(r)}{\dot f(r)+\dot f(r)^3}=-\frac{1}{r}.   
\end{equation*}
Assume $\dot f(r)>0$. By choosing $r_1>0$, such that $r\geq r_1$, 
\begin{equation*}
 \log \dot f(r)-\log\sqrt{1+(\dot f(r))^2}=\log r_1-\log r.    
\end{equation*}
This implies 
\begin{equation*}
 \dot f(r)=\frac{1}{\sqrt{\frac{r^2}{r_1^2}-1}}.    
\end{equation*}
There are two solutions 
\begin{equation*}
  f(r)=c_1\pm r_1\mathrm{arc}\cosh(\frac{r}{r_1}),
\end{equation*}
where $c_1$, $r_1\in\mathbb{R}$ are constants. These are known as the Catenoid solution. In other words, 
\begin{equation*}
\sqrt{\Sigma_{33}}=c_1\pm r_1\mathrm{arc}\cosh(\frac{{\Sigma_{11}+\Sigma_{22}}}{r_1}). 
\end{equation*}
This solution can be viewed as a Catenoid in terms of Gaussian covariance matrices. 
\end{example}

\begin{example}[Wasserstein Helicoid on Gaussian distributions]
Let 
\begin{equation*}
z(s,t)=k(\xi), \quad \textrm{where}\quad \xi=\frac{t}{s}.    
\end{equation*}
Again, by some calculations in \cite{AT}, we have 
\begin{equation*}
 (1+\xi^2) \ddot k(\xi)+2\xi \dot k(\xi)=0.   
\end{equation*}
By separating variables, 
\begin{equation*}
\frac{\ddot k(\xi)}{\dot k(\xi)}=-\frac{2\xi}{1+\xi^2}.    
\end{equation*}
This implies the following first-order integral 
\begin{equation*}
 \log\dot k(\xi)=\log c_1-\log (1+\xi^2),    
\end{equation*}
where $c_1\in\mathbb{R}$ is a constant. This leads to a solution named the Helicoid. 
\begin{equation*}
 k(\xi)=c_1\mathrm{arc}\tan(\xi)+c_2,
\end{equation*}
where $c_1$, $c_2\in\mathbb{R}$ are constants. In other words, 
\begin{equation*}
 \sqrt{\Sigma_{33}}=c_1\mathrm{arc}\tan(\sqrt{\frac{\Sigma_{22}}{\Sigma_{11}}})+c_2. 
\end{equation*}
This solution can be viewed as a Helicoid in terms of Gaussian covariance matrices. 
\end{example}

\section{Concluding remarks}
The main point of the present work is to introduce the notion of minimal surface in the Wasserstein space of probability distributions $(\mathcal P_2,W_2)$. Besides a purely mathematical interest, motivation for considering surfaces stems from problems in thermodynamics where (finitely parametrized, so far) thermodynamic states traverse closed paths in $(\mathcal P_2,W_2)$ that enclose regions, where surface integrals capture available work over a cycle. In these, the perimeter quantifies dissipation, leading to isoperimetric problems \cite{miangolarra2021energy}.
Thus, our interest has been in advancing the notion of minimal Wasserstein surfaces with an eye toward carrying out such analyses in thermodynamics and possibly other disciplines in a parametric-free fashion.

Specifically, in the present paper, we introduce minimal surface problems in Wasserstein spaces to serve as a canonical choice of a surface enclosed by a Wasserstein curve. In the above development, we focused on minimal surfaces with borders geodesic curves, but more general boundaries are also of interest.

Our basic formulation calls for a two-parameter Benamou-Brenier-type variational problem in the space of probability densities. We derive the minimal surface equations and critical-point systems of the proposed variational problems. These are systems of two-parameter PDEs. Finite-dimensional examples of, e.g., Gaussian distributions, are provided. We derive several explicit solutions for Wasserstein minimal surfaces, such as planes, Scherk’s surfaces, Catenoids, and Helicoids, in terms of suitably parametrized covariance matrices of the respective Gaussian distributions. 

Several natural questions arise that are most open at present. In particular, the existence of solutions in the general case for the two-parameter variational problem of minimal Wasserstein surfaces is broadly open. Computation is natural next, as efficient schemes in high dimensions need to be developed. 
Wasserstein minimal surfaces, very much like any two-dimensional minimal surface, must admit a formulation based on vanishing mean curvature. Finally, in the more general setting where the boundary is a closed curve in Wasserstein space, what properties of the boundary can ensure the existence of a minimal surface?
\black

\bibliography{refs}
\bibliographystyle{siam}
\end{document}